\newtheorem{thm}{Theorem}[section]
\newtheorem{lem}[thm]{Lemma}
\newtheorem{prop}[thm]{Proposition}
\newtheorem{cor}[thm]{Corollary}
\newtheorem{de}[thm]{Definition}
\theoremstyle{definition}
\newtheorem{rmk}[thm]{Remark}
\newtheorem{example}[thm]{Example}
\newcommand{\R}{\mathbb{R}}      			
\newcommand{\Z}{\mathbb{Z}}      			
\newcommand{\C}{\mathbb{C}}      			
\newcommand{\Q}{\mathbb{Q}}      			
\newcommand{\im}{\operatorname{Im}}			
\renewcommand{\ker}{\operatorname{Ker}}		
\newcommand{\rk}{\operatorname{Rk}}       
\newcommand{\trop}{\operatorname{Trop}}
\newcommand{\gl}{\operatorname{GL}}
\newcommand{\ind}{\operatorname{Ind}}
\newcommand{\val}{\operatorname{val}}
\newcommand{\sgn}{\operatorname{Sgn}}
\newcommand{\lie}{\operatorname{Lie}}
\newcommand{\ob}{\operatorname{Ob}}
\newcommand{\G}{\textbf{G}}
\newcommand{\thetaThreeOneOne}[5]{
\begin{tikzpicture}[baseline=-1, line cap=round,line join=round,x=1cm,y=1cm]
        \draw [line width=1.pt] (-0.5,0) -- (0.5,0);
        \draw [fill=black] (-0.5,0) circle (1pt);
        \draw [fill=black] (0.5,0) circle (1pt);
        \draw [line width=1.pt] (-0.5,0) .. controls (-0.4,0.8) and (0.4, 0.8) .. (0.5,0);
        \draw [line width=1.pt] (-0.5,0) .. controls (-0.4,-0.8) and (0.4, -0.8) .. (0.5,0);
        \draw [fill=black] (0,0.6) circle (1pt);
        \draw [fill=black] (0,0) circle (1pt);
        \draw [fill=black] (0,-0.6) circle (1pt);
        \draw [fill=black] (-0.25,0.5) circle (1pt);
        \draw [fill=black] (0.25,0.5) circle (1pt);
        \draw [line width=1.pt] (-0.25,0.5) -- (-0.3,0.8);
        \draw (-0.3,0.8) node[anchor=south] {#1};
        \draw [line width=1.pt] (0,0.6) -- (0,0.9);
        \draw (0,0.9) node[anchor=south] {#2};
        \draw [line width=1.pt] (0.25,0.5) -- (0.3,0.8);
        \draw (0.3,0.8) node[anchor=south] {#3};
        \draw [line width=1.pt] (0,0) -- (0,0.3);
        \draw (0,0.3) node[anchor= west] {#4};
        \draw [line width=1.pt] (0,-0.6) -- (0,-0.3);
        \draw (0,-0.3) node[anchor= west] {#5};
        \end{tikzpicture}}
\newcommand{\thetaTwoTwoOne}[5]{
\begin{tikzpicture}[baseline=-1, line cap=round,line join=round,x=1cm,y=1cm]
        \draw [line width=1.pt] (-0.5,0) -- (0.5,0);
        \draw [fill=black] (-0.5,0) circle (1pt);
        \draw [fill=black] (0.5,0) circle (1pt);
        \draw [line width=1.pt] (-0.5,0) .. controls (-0.4,0.8) and (0.4, 0.8) .. (0.5,0);
        \draw [line width=1.pt] (-0.5,0) .. controls (-0.4,-0.8) and (0.4, -0.8) .. (0.5,0);
        \draw [fill=black] (-0.2,0.55) circle (1pt);
        \draw [fill=black] (0.2,0.55) circle (1pt);
        \draw [fill=black] (-0.2,0) circle (1pt);
        \draw [fill=black] (0.2,0) circle (1pt);
        \draw [fill=black] (0,-0.6) circle (1pt);
        \draw [line width=1.pt] (-0.2,0.55) -- (-0.2,0.85);
        \draw (-0.2,0.85) node[anchor=south] {#1};
        \draw [line width=1.pt] (0.2,0.55) -- (0.2,0.85);
        \draw (0.2,0.85) node[anchor=south] {#2};
        \draw [line width=1.pt] (-0.2,0) -- (-0.2,0.2);
        \draw (-0.2,0.14) node[anchor=south] {#3};
        \draw [line width=1.pt] (0.2,0) -- (0.2,0.2);
        \draw (0.2,0.14) node[anchor=south] {#4};
        \draw [line width=1.pt] (0,-0.6) -- (0,-0.3);
        \draw (0,-0.3) node[anchor= west] {#5};
        \end{tikzpicture}
}
\newcommand{\thetaTwoOneOne}[5]{
\begin{tikzpicture}[baseline=-1, line cap=round,line join=round,x=1cm,y=1cm]
        \draw [line width=1.pt] (-0.5,0) -- (0.5,0);
        \draw [fill=black] (-0.5,0) circle (1pt);
        \draw [fill=black] (0.5,0) circle (1pt);
        \draw [line width=1.pt] (-0.5,0) .. controls (-0.4,0.8) and (0.4, 0.8) .. (0.5,0);
        \draw [line width=1.pt] (-0.5,0) .. controls (-0.4,-0.8) and (0.4, -0.8) .. (0.5,0);
        \draw [fill=black] (-0.2,0.55) circle (1pt);
        \draw [fill=black] (0.2,0.55) circle (1pt);
        \draw [fill=black] (0,0) circle (1pt);
        \draw [fill=black] (0,-0.6) circle (1pt);
        \draw [line width=1.pt] (-0.2,0.55) -- (-0.2,0.85);
        \draw (-0.2,0.85) node[anchor=south] {#1};
        \draw [line width=1.pt] (0.2,0.55) -- (0.2,0.85);
        \draw (0.2,0.85) node[anchor=south] {#2};
        \draw [line width=1.pt] (0,0) -- (0,0.2);
        \draw (0,0.2) node[anchor=south] {#3};
        \draw [line width=1.pt] (0,-0.6) -- (0,-0.3);
        \draw (0,-0.3) node[anchor= west] {#4};
        \draw [line width=1.pt] (0.5,0) -- (0.7,0);
        \draw (0.7,0) node[anchor= west] {#5};
        \end{tikzpicture}}
\title{The $S_n$-equivariant rational homology of the tropical moduli spaces $\Delta_{2,n}$}
\author[C.H. Yun]{Claudia He Yun}\address{Department of Mathematics, Brown University, Box
1917, Providence, RI 02912}\email{he\_yun@brown.edu}
\date{}
\begin{document}

\maketitle

\begin{abstract}
    We compute the $S_n$-equivariant rational homology of the tropical moduli spaces $\Delta_{2,n}$ for $n\leq 8$ using a cellular chain complex for symmetric $\Delta$-complexes in Sage.
\end{abstract}

\section{Introduction}

The tropical moduli space $\Delta_{g,n}$ is a topological space that parametrizes isomorphism classes of $n$-marked stable tropical curves of genus $g$ with total volume 1. This space has been studied for its own sake in tropical geometry, and has connections to several other spaces of interest. For example, $\Delta_{g,n}$ is identified with the link of the vertex in the tropical moduli space $M_{g,n}^{\trop}$ as well as the boundary complex of $\overline{\mathcal{M}}_{g,n}$, the algebraic moduli space of stable curves of genus $g$ and $n$ markings \cite{abramovich2015tropicalization, CGP}. The symmetric group naturally acts on $\Delta_{g,n}$ by permuting its marked points and thus induces the structure of $S_n$-representations on its reduced rational homology.

In this paper, we compute the $S_n$-equivariant rational homology of $\Delta_{2,n}$ for $n$ up to 8 in SageMath.

\begin{thm}
The characters of the $S_n$-equivariant rational homology groups $\tilde{H_i}(\Delta_{2,n};\Q)$ for $n$ up to 8 are given in Table \ref{decomposition} as sums of irreducible characters.
\label{mainThm}
\end{thm}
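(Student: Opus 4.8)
The statement is purely computational, so the plan is to realize $\Delta_{2,n}$ explicitly as a symmetric $\Delta$-complex, assemble the associated rational cellular chain complex together with its $S_n$-action, and then compute homology and decompose each group into irreducible characters in SageMath. The conceptual backbone is the identification, due to \cite{CGP}, of $\Delta_{2,n}$ with a symmetric $\Delta$-complex whose $p$-cells are indexed by isomorphism classes of connected $n$-marked stable weighted graphs $(G,w)$ of genus $2$ having exactly $p+1$ edges, where the genus is $b_1(G)+\sum_v w(v)$ and stability requires $2w(v)-2+\val(v)>0$ at every vertex (counting legs and half-edges). The essential feature that makes this tractable over $\Q$ is that the reduced rational homology $\tilde{H_i}(\Delta_{2,n};\Q)$ is computed by the complex $C_\bullet$ in which $C_p$ is spanned by such graphs together with an orientation (an ordering of the edges up to even permutation), subject to the rule that any graph admitting an automorphism inducing an odd permutation of its edges is identically zero. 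This collapses the problem to a finite, explicitly enumerable complex.

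First I would enumerate the cells. Since the genus is fixed at $2$ we have $b_1(G)\le 2$, so the unweighted cores carrying the homology are the finitely many genus-$2$ graphs (built from the theta graph and the dumbbell, together with their lower-$b_1$ degenerations carrying vertex weights); the number of edges ranges from $1$ up to $3g-3+n=n+3$, the top cells being the trivalent genus-$2$ graphs with the $n$ labelled legs distributed among the vertices. The plan is to generate all genus-$2$ stable weighted graphs systematically — the cores, then all subdivisions, added genus-$0$ trees, and attachments of the $n$ labelled legs — recording for each graph its automorphism group and the induced action on the edge set and on the marking set, and discarding every graph whose automorphism group contains an odd edge-permutation.

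Next I would assemble the boundary operator $\partial$ as the signed sum over the edges of $G$ of single edge contractions: an ordinary edge is contracted by merging its endpoints and adding their weights, while a loop is deleted with the weight of its vertex raised by one, so that genus is preserved in both cases. After verifying $\partial^2=0$ as a consistency check, I note that relabelling the $n$ legs gives each $C_p$ the structure of an $S_n$-representation over $\Q$ (with signs coming from the induced edge permutations), making $C_\bullet$ a complex of $S_n$-modules whose homology inherits the action. To produce the character decomposition recorded in Table~\ref{decomposition}, I would compute, for a representative of each conjugacy class of $S_n$, the induced map on each $\tilde{H_i}$ and take its trace, then recover the multiplicities of the irreducibles by pairing these characters against the character table of $S_n$.

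The main obstacle is computational rather than conceptual, and it is twofold. First, the enumeration and the resulting chain groups grow rapidly with $n$, so the complex must be generated and stored efficiently and the linear algebra performed exactly over $\Q$; for $n=7,8$ this is the genuine bottleneck. Second, and more delicate, is the orientation bookkeeping: one must track consistently how each graph automorphism and each permutation of the $n$ legs acts on the chosen edge-ordering, since a single sign error propagates through $\partial$ and corrupts the equivariant answer. Getting the generating set exactly right — graphs modulo even edge-reorderings, with odd-automorphism graphs killed — and matching Sage's internal cell indexing to this convention is where the care is required.
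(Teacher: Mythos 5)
Your outline is mathematically sound -- the cellular chain complex of the symmetric $\Delta$-complex, with graphs admitting an odd edge-automorphism set to zero, does compute $\tilde{H}_*(\Delta_{2,n};\Q)$ equivariantly -- but it is not the route the paper takes, and the difference is what makes the largest cases possible. The paper never builds the full complex. It first replaces $\tilde{H}_*(\Delta_{2,n};\Q)$ by the relative cellular homology of the pair $(\Delta_{2,n},\Delta_{2,n}^{\theta^\circ})$, using two acyclicity inputs: the bridge locus is contractible (Remark \ref{rmk: bridge locus is contractible}), and the cyclic-theta locus has only $2$-torsion in reduced homology (Theorem \ref{cyclicTheta}), hence is rationally acyclic. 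The relative complex is then concentrated in degrees $n$, $n+1$, $n+2$ and is spanned only by graphs of \emph{full theta type}; moreover $d_{n+1}$ is surjective, so only $H_{n+2}=\ker d_{n+2}$ is computed directly, with $\chi(\tilde{H}_{n+1})$ recovered from the $S_n$-equivariant Euler characteristic of \cite{CFGP} (Remark \ref{discussionOfFarber}). Finally, the character of $\ker d_{n+2}$ is extracted not by taking traces of conjugacy-class representatives on homology, as you propose, but by decomposing the chain group into isotypic pieces, projecting to a single ``drawer'' $V_{i,1}$ via the operators $p_{\alpha\beta}$ of Equation \eqref{equation: projection to the first drawer}, and reading off multiplicities from a rank computation via Schur's lemma. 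Your trace-on-homology method is a legitimate alternative (and arguably simpler per conjugacy class), but the genuine risk in your plan is feasibility: enumerating \emph{all} stable genus-$2$ weighted $8$-marked graphs with up to $11$ edges, including the entire bridge locus, produces chain groups far larger than the full-theta-type complex, and the paper already reports $162$ hours for $n=8$ after the reduction. If you intend your computation to actually terminate for $n=7,8$, you need the contractibility/acyclicity reductions (or something equivalent); as written, your proposal omits the step that carries most of the weight.
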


\begin{table}[]
\begin{tabular}{|l|l|}
\hline
$n$ & $S_n$-equivariant homology \\ \hline
\multirow{2}{*}{0} & $\chi(\tilde{H}_{2}(\Delta_{2,0};\Q)) = 0$  \\
\cline{2-2} & $\chi(\tilde{H}_{1}(\Delta_{2,0};\Q)) = 0$  \\ \hline
\multirow{2}{*}{1} & $\chi(\tilde{H}_{3}(\Delta_{2,1};\Q)) = 0$  \\
\cline{2-2} & $\chi(\tilde{H}_{2}(\Delta_{2,1};\Q)) = 0$  \\ \hline
\multirow{2}{*}{2} & $\chi(\tilde{H}_{4}(\Delta_{2,2};\Q)) = \chi_{2}$  \\
\cline{2-2} & $\chi(\tilde{H}_{3}(\Delta_{2,2};\Q)) = 0$  \\ \hline
\multirow{2}{*}{3} & $\chi(\tilde{H}_{5}(\Delta_{2,3};\Q)) = 0$  \\
\cline{2-2} & $\chi(\tilde{H}_{4}(\Delta_{2,3};\Q)) = 0$  \\ \hline
\multirow{2}{*}{4} & $\chi(\tilde{H}_{6}(\Delta_{2,4};\Q)) = \chi_{211}$  \\
\cline{2-2} & $\chi(\tilde{H}_{5}(\Delta_{2,4};\Q)) = \chi_{4}$  \\ \hline
\multirow{2}{*}{5} & $\chi(\tilde{H}_{7}(\Delta_{2,5};\Q)) = \chi_{41}+\chi_{32}+\chi_{311}$ \\ 
\cline{2-2} & $\chi(\tilde{H}_{6}(\Delta_{2,5};\Q)) = \chi_{32}$ \\ \hline
\multirow{2}{*}{6} & $\chi(\tilde{H}_{8}(\Delta_{2,6};\Q)) = \chi_{6}+\chi_{1^6}+\chi_{51}+\chi_{21111}+2\chi_{33}+\chi_{222}+2\chi_{42}+\chi_{2211}+2\chi_{321}$ \\ 
\cline{2-2} & $\chi(\tilde{H}_{7}(\Delta_{2,6};\Q)) = \chi_{411}+\chi_{321}$ \\ \hline
\multirow{2}{*}{7} & $\chi(\tilde{H}_{9}(\Delta_{2,7};\Q)) = \chi_{2221}+3\chi_{31111}+4\chi_{3211}+3\chi_{322}+\chi_{331}+3\chi_{4111}+5\chi_{421}+\chi_{43}$ \\
& $+\chi_{511}+2\chi_{52}$ \\ 
\cline{2-2} & $\chi(\tilde{H}_{8}(\Delta_{2,7};\Q)) = \chi_{1^7}+\chi_{2221}+\chi_{3211}+\chi_{331}+\chi_{4111}+\chi_{421}+\chi_{43}+\chi_{511}$ \\ \hline
\multirow{3}{*}{8} & $\chi(\tilde{H}_{10}(\Delta_{2,8};\Q)) = \chi_{1^8}+2\chi_{21^6}+3\chi_{221^4}+5\chi_{2^311}+\chi_{2^4}+7\chi_{32111}+6\chi_{3221}+9\chi_{3311}$\\
& $+5\chi_{332}+\chi_{41^4} + 10\chi_{4211} + 2\chi_{422} + 10\chi_{431} + \chi_{44} + 7\chi_{5111} + 7\chi_{521} + 2\chi_{53} + 5\chi_{611}$\\
\cline{2-2} & $\chi(\tilde{H}_{9}(\Delta_{2,8};\Q)) = \chi_{31^5} + 2\chi_{32111} + 2\chi_{3221} + \chi_{3311} + \chi_{332} +2\chi_{4211} + 2\chi_{422} + 2\chi_{431} $ \\
& $+ \chi_{5111}+2\chi_{521} + \chi_{53} +\chi_{62} + \chi_8$ \\ \hline
\end{tabular}
\caption{Decompositions of the characters of the $S_n$-representations afforded by $\tilde{H}_i(\Delta_{2,n};\Q).$ Irreducible characters are indexed by partitions of $n$. Explicit characters are in Appendix \ref{charTable}.}
\label{decomposition}
\end{table}


The $S_n$-equivariant rational homology of $\Delta_{g,n}$ is known for $g=0$ and $g=1$. When $g=0$ and $n\geq 4$, $\Delta_{0,n}$ is the space of fully grown trees $T_{n-1}$ studied by Robinson and Whitehouse \cite{Robinson1996TheTR} and the space of phylogenetic trees with $n$ leaves by Billera, Holmes and Vogtmann \cite{BILLERA2001733}. \cite{Robinson1996TheTR} shows that $T_n$ is homotopy equivalent to the wedge of $(n-1)!$ spheres of dimension $n-3$ and they calculate the character of the integral $S_{n+1}$-representation carried by $H_{n-3}(T_n;\Z)$ to be \[\epsilon\cdot(\ind_{S_{n}}^{S_n+1}\lie_{n} - \lie_{n+1})\] where $\epsilon$ is the alternating character and $\lie_r = \ind_{C_r}^{S_r}\rho_r$ \cite{brandt1944}. Here $\rho_r$ is a faithful linear character of the subgroup in $S_r$ generated by an $r$-cycle. The restriction of this representation to $S_n$ coincides with a virtual representation of $S_n$ on the reduced complex homology groups of the order complex of the partition lattice $\Pi_n$ that is studied by Stanley \cite{STANLEY1982132}. 

When $g=1$, Chan, Galatius, and Payne show that $\Delta_{1,n}$ is a wedge of $(n-1)!/2$ spheres of dimension $n-1$ and the representation of $S_n$ on $H_{n-1}(\Delta_{1,n};\Q)$ is \[\text{Ind}_{D_{n,\phi}}^{S_n}\text{Res}_{D_{n,\psi}}^{S_n}\text{sgn}\] where $\phi$ is the action of the dihedral group on the vertices of an $n$-gon and $\psi$ is the action on the edges \cite{CGP,CGPTopOfMwithMarkedPoints}. This result can also be derived from Geztler's computation of the $S_n$-equivariant Serre characteristic of $\mathcal{M}_{1,n}$ \cite{getzler1999}. 

For higher genera, the homotopy type of $\Delta_{g,n}$ is only known for a few pairs of $(g,n)$. In \cite{allcock2019trop}, it is shown that $\Delta_{g,n}$ is simply connected for $g\geq 1$ and that the space $\Delta_{3,0}$ is homotopy equivalent to $S^5$. There have also been calculations of the homology and cohomology of $\Delta_{g,n}$. For $\Delta_{2,n}$, Chan calculates its rational homology for $n$ up to 8 \cite{ChanTopOfM2n}. This paper extends Chan's work by promoting \cite[Table 1]{ChanTopOfM2n} to be $S_n$-representations. In particular, we may recover \cite[Table 1]{ChanTopOfM2n} by evaluating the character functions in Table \ref{decomposition} at the identity element.

The tropical moduli space $\Delta_{g,n}$ provides information on the boundary complex of the Deligne-Mumford compactification $\overline{\mathcal{M}}_{g,n}$ of $\mathcal{M}_{g,n}$ by stable curves. By the work of Deligne \cite{PMIHES_1971__40__5_0, PMIHES_1974__44__5_0}, there is a natural isomorphism 
\begin{equation}
    \tilde{H}_{k-1}(\Delta_{g,n};\Q) \cong \text{Gr}_{6g-6+2n}^WH^{6g-6+2n-k}(\mathcal{M}_{g,n};\Q)
    \label{Top weight cohomology of Mgn is identified with homology of Deltagn}
\end{equation} that identifies the reduced rational homology of $\Delta_{g,n}$ with the top weight rational cohomology of $\mathcal{M}_{g,n}$. Specializing to the case $g=2$, we obtain the following corollary.

\begin{cor}
The top weight rational cohomology $\text{Gr}_{6+2n}^WH^{i}(\mathcal{M}_{2,n};\Q)$ is supported in degrees $n+3$ and $n+4$. The representations of $S_n$ on the groups $\text{Gr}_{6+2n}^WH^{n+3}(\mathcal{M}_{2,n};\Q)$ and $\text{Gr}_{6+2n}^WH^{n+4}(\mathcal{M}_{2,n};\Q)$, induced by permuting marked points, are given in Table \ref{decomposition}.
\label{mainCor}
\end{cor}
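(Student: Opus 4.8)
The plan is to deduce the corollary directly from Theorem \ref{mainThm} together with the natural isomorphism \eqref{Top weight cohomology of Mgn is identified with homology of Deltagn}. First I would read off from Table \ref{decomposition} the key structural fact that, for every $n \le 8$, the reduced rational homology $\tilde{H}_i(\Delta_{2,n};\Q)$ vanishes outside the two consecutive degrees $i = n+1$ and $i = n+2$. For $n \in \{0,1,3\}$ both groups vanish, and for $n=2$ only the top group $\tilde{H}_{n+2}$ survives, but in every case the support is contained in $\{n+1,\, n+2\}$.

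Next I would specialize \eqref{Top weight cohomology of Mgn is identified with homology of Deltagn} to $g=2$, where it reads $\tilde{H}_{k-1}(\Delta_{2,n};\Q) \cong \text{Gr}_{6+2n}^W H^{6+2n-k}(\mathcal{M}_{2,n};\Q)$. Setting $k = i+1$, so that the left-hand side becomes $\tilde{H}_i$, the cohomological degree on the right becomes $6+2n-(i+1) = 2n+5-i$. Substituting the two surviving homological degrees, $i = n+2$ yields cohomological degree $n+3$ and $i = n+1$ yields $n+4$. This shows the top weight cohomology is concentrated in degrees $n+3$ and $n+4$, and it transports the explicit character decompositions of Table \ref{decomposition} across the isomorphism: $\tilde{H}_{n+2}(\Delta_{2,n};\Q)$ computes $\text{Gr}_{6+2n}^W H^{n+3}(\mathcal{M}_{2,n};\Q)$, while $\tilde{H}_{n+1}(\Delta_{2,n};\Q)$ computes $\text{Gr}_{6+2n}^W H^{n+4}(\mathcal{M}_{2,n};\Q)$.

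The one point requiring care — and the step I expect to be the main obstacle — is that the corollary is an identity of $S_n$-representations, whereas \eqref{Top weight cohomology of Mgn is identified with homology of Deltagn} is stated only as an isomorphism of vector spaces. To upgrade it I would verify that the isomorphism is $S_n$-equivariant for the action permuting the marked points. This should follow from the naturality of Deligne's weight filtration together with the functoriality of the boundary-complex identification in \cite{CGP}: the symmetric group acts on both $\mathcal{M}_{2,n}$ and $\Delta_{2,n}$ by permuting marked points, these two actions are compatible under the tropicalization/boundary-complex comparison, and hence they commute with the induced maps on (co)homology. Once this equivariance is recorded, reading off the characters from Table \ref{decomposition} in the two relevant degrees completes the proof.
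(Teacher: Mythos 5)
Your proposal matches the paper's (implicit) argument: the paper offers no separate proof of Corollary \ref{mainCor} beyond specializing the natural isomorphism \eqref{Top weight cohomology of Mgn is identified with homology of Deltagn} to $g=2$, and your degree bookkeeping ($i=n+2$ giving cohomological degree $n+3$, $i=n+1$ giving $n+4$) together with the transport of the characters from Table \ref{decomposition} is exactly what is intended, with your remark on $S_n$-equivariance of the isomorphism being a reasonable (and in the paper unstated) justification. The one refinement worth noting is that the concentration statement for general $n$ rests on the cited result of Chan that $\tilde{H}_i(\Delta_{2,n};\Q)$ vanishes outside degrees $n+1$ and $n+2$ for all $n$, rather than only on inspection of Table \ref{decomposition}, which covers just $n\le 8$.
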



There have been several previous studies on the $S_n$-equivariant invariants of $\mathcal{M}_{g,n}$ and related moduli spaces. Gorsky calculates the $S_n$-equivariant Euler characteristics of $\mathcal{M}_{2,n}$ \cite{gorsky2007snequivariant}. In a later paper, he derives the generating function for the $S_n$-equivariant Euler characteristics of $\mathcal{M}_{g,n}$ \cite{gorsky2009equivariant}. He also computes the generating function for the $S_n$-equivariant Euler characteristics of $\mathcal{H}_{g,n}$, the moduli space of hyperelliptic curves of genus $g$ with $n$ marked points, for $g \geq 2$ \cite{gorsky2008snequivariant}. Recently, \cite{CFGP} gives the generating function for the $S_n$-equivariant top weight Euler characteristics of $\mathcal{M}_{g,n}$, which, by the identification of Equation \eqref{Top weight cohomology of Mgn is identified with homology of Deltagn}, gives the generating function for the $S_n$-equivariant Euler characteristics of $\Delta_{g,n}$. Note that since $\Delta_{0,n}$ and $\Delta_{1,n}$ both have homotopy types of a wedge of spheres, calculating the $S_n$-equivariant Euler characteristics is exactly calculating the characters of the $S_n$-representations afforded by their only nontrivial reduced homology group. The case when $g=2$ is the first instance where individual characters afforded by homology groups are opaque. See Remark \ref{discussionOfFarber} for more discussion on the connection between the computations of this paper and the Euler characteristics result of \cite{CFGP}.

The organization of this paper is as follows. In Section \ref{section: prelim}, we review the construction of the tropical moduli space $\Delta_{g,n}$ for general $g$ and $n$. We also recall the definitions of symmetric $\Delta$-complexes and a particular cellular chain complex that computes the rational homology of its geometric realization. In Section \ref{computation}, we specialize to the genus 2 case and compute $\tilde{H}_*(\Delta_{2,n};\Q)$. We also discuss the performance of the Sage program we use for this computation and analyze the combinatorial structure of a particular subrepresentation of $\tilde{H}_7(\Delta_{2,5};\Q)$.

\vspace{2mm} 

\noindent \textbf{Acknowledgments.}
I am grateful to Melody Chan for suggesting this direction of research, answering numerous questions, providing useful feedback on my writing, and encouraging me in times of doubt. I thank Christin Bibby and Nir Gadish for enlightening discussions on the topic. I also thank Sarah Griffith for many words of encouragement and mathematical discussions.

\section{Preliminaries}
\label{section: prelim}
In this section we review the general theory of tropical curves and their moduli spaces, following Section 2 in \cite{CGPTopOfMwithMarkedPoints} and Section 2 in \cite{ChanTopOfM2n}. We also recall the notions of symmetric $\Delta$-complexes and a cellular chain complex that calculates their geometry realizations, following Section 3 in \cite{CGPTopOfMwithMarkedPoints}.
\subsection{The tropical moduli spaces $\Delta_{g,n}$}
\label{section: delta_gn}
All graphs in consideration are finite and connected multigraphs, where loops and parallel edges are allowed. Given a graph $G$, we denote the edge set and the vertex set by $E(G)$ and $V(G)$, respectively. The valence $\val(v)$ of a vertex $v$ of $\textbf{G}$ is the number of half-edges incident to $v$. A loop at a vertex contributes twice to the valence.

A weight function $w$ on a graph $G$ is a function $w: V(G) \to \Z_{\geq 0}$. A pair $(G,w)$ is called a weighted graph. The genus of such a graph is \[g(G,w):= b_1(G) + \sum_{v \in V(G)} w(v),\] where $b_1(G) = |E(G)|-|V(G)|+1$ is the first Betti number of $G$.

An $n$-marking on a graph $G$ is a function $m:\{1,2,\cdots,n\} \to V(G)$. An $n$-marked weighted graph $\textbf{G}$ is therefore a triple $\textbf{G}=(G,w,m)$ where $(G,w)$ is a weighted graph and $m$ is an $n$-marking function. Such a graph is stable if the following condition, called the stability condition, holds: for every vertex $v \in V(G)$, we have \[2w(v)-2+\val(v)+|m^{-1}(v)| > 0.\]

Now we recall the category $\Gamma_{g,n}$. It is described in detail in \cite{CGPTopOfMwithMarkedPoints}. The objects in this category are connected stable $n$-marked weighted graphs of genus $g$. For $e\in E(\textbf{G})$ that is not a loop, there is a morphism from $\textbf{G}$ to the edge contracted graph $\G' = \textbf{G}/e$. The graph $\G'$ is obtained by removing $e$ and identifying its endpoints $\{u,v\}$ as one vertex $[e]$. The new vertex $[e]$ satisfies $w'([e]) = w(u)+w(v)$ and $m'^{-1}([e]) = m^{-1}(u) \cup m^{-1}(v)$. If $e$ is a loop based at $v$, then $\textbf{G}/e$ is obtained by removing $e$ and increasing the weight of $v$ by 1. The morphisms in $\Gamma_{g,n}$ are compositions of edge contractions $\textbf{G} \to \textbf{G}/e$ and isomorphisms $\textbf{G} \to \textbf{G}'$.

Two graphs $\textbf{G}$ and $\textbf{G}'$ are said to have the same combinatorial type if they are isomorphic in $\Gamma_{g,n}$. For our purposes, we may replace $\Gamma_{g,n}$ with a skeleton subcategory, by choosing a representative for each isomorphism class. That makes $\Gamma_{g,n}$ a finite category.

Following the language in \cite{ChanTopOfM2n}, for $g \geq 2$, we define the unmarked type of $\textbf{G} = (G,m,w)$ to be the isomorphism class of $(G',w') \in \Gamma_{g,0}$ obtained in the follow way: $G'$ is the smallest connected subgraph of $G$ that contains all cycles of $G$ and positively weighted vertices. We suppress vertices of valence 2 so that $(G',w')$ is stable.

A length function $l$ on $\textbf{G}=(G,w,m) \in \Gamma_{g,n}$ is an element in $\R^{E(G)}_{>0}$. It assigns a positive real number, i.e., length, to each edge in $G$. Two pairs $(\textbf{G},l)$ and $(\textbf{G}',l')$ are isometric if there is an isomorphism $\phi: \textbf{G} \to \textbf{G}'$ in $\Gamma_{g,n}$ for which 
\[l(e) = l'(\phi(e)) \quad \text{ for all } e\in E(\textbf{G}).\] The pair $(\textbf{G},l)$ is called a (stable) tropical curve and the sum of all edge lengths is called the volume.

Now we are in a position to construct the moduli space of tropical curves $M^{\text{trop}}_{g,n}$. The construction and notations follow \cite{ChanTopOfM2n, CGP}. Define
\begin{align*}
    \overline{C}(\textbf{G}) &:= \R^{E(G)}_{\geq 0},
\end{align*}
where $\textbf{G} \in \Gamma_{g,n}$. Given two stable weighted graphs $\textbf{G}$ and $\textbf{G}'$, and an isomorphism in $\Gamma_{g,n}$ \[\alpha: \textbf{G}' \to \textbf{G}/S\] for some $S\subset E(\textbf{G})$, we define a linear map \[L_\alpha: \overline{C}(\textbf{G}') \to \overline{C}(\textbf{G})\] that sends an element $l'\in \overline{C}(\textbf{G}')$ to $l\in \overline{C}(\textbf{G})$ with \[l(e) = \begin{cases}
l'(\alpha^{-1}(e)) & e \not \in S \\
0 & e \in S
\end{cases}
\] for all $e \in E(\textbf{G})$. The map $L_\alpha$ identifies $\overline{C}(\textbf{G}')$ with the face in $\overline{C}(\textbf{G})$ where all edges in $S$ have 0 length.

\begin{de}
The tropical moduli space $M^{\text{trop}}_{g,n}$ is the colimit in the category of topological spaces \[\varinjlim(\{\overline{C}(\textbf{G})\},\{L_{\alpha}\})\] where $\textbf{G}$ ranges over all graphs in $\Gamma_{g,n}$ and $L_{\alpha}$ are all linear maps defined as above.
\end{de}
\noindent The volume of a tropical curve extends to a function $v: M^{\text{trop}}_{g,n} \to \R_{\geq 0}$.
\begin{de}
The subspace $\Delta_{g,n}$ is the inverse image $v^{-1}(1)$ of the volume function at 1.
\end{de}
\noindent Therefore $\Delta_{g,n}$ parametrizes tropical curves of volume 1.

\subsection{Symmetric $\Delta$-complexes}
\label{section: sym delta complex}

Denote the set $\{0,1,\cdots,p\}$ by $[p]$.

\begin{de}[\cite{CGP}]
Let $I$ be the category whose objects are $[p]$ for each $p \geq 0$ together with $[-1] := \emptyset$ and whose morphisms are all injective maps. A symmetric $\Delta$-complex is a functor $X: I^{\text{op}} \to \text{Sets}$.
\end{de}

\begin{rmk}
Let $\delta^i: [p-1] \to [p]$ be the unique order-preserving injection whose image misses $i$ for $0\leq i\leq p$. Morphisms in $I$ are generated by permutations $[p] \to [p]$ and the $\delta^i$.
\end{rmk}

Recall that the standard $p$-simplex $\Delta^p \subset \R^{p+1}$ is defined to be the convex hull of the standard basis vectors $e_0,\dots,e_p$, i.e., \[\Delta^p = \left\{\sum_{i=0}^p t_ie_i : \sum t_i = 1 \text{ and } t_i \geq 0 \text{ for all }i\right\}.\] Given a set map $\theta: [p] \to [q]$, it induces a map between simplices $\theta_*:\Delta^p \to \Delta^q$ where \[\theta_*\left(\sum_{i=0}^p t_ie_i\right) = \sum_{i=0}^q \left(\sum_{j \in \theta^{-1}(i)}t_j\right)e_i.\]

For notational simplicity, we denote $X([p])$ by $X_p$. The geometric realization of a symmetric $\Delta$-complex $X$ is the topological space \[|X| = \left(\coprod_p X_p \times \Delta^p\right)/\sim,\] where the equivalence relation is given by $(x,\theta_*a) \sim (\theta^*x,a)$, where $x\in X_p$, $\theta: [p] \to [q]$ injective, and $a \in \Delta^p$. This equivalence relation is analogous to the gluing maps of a $\Delta$-complex.

\begin{prop}\cite{CGPTopOfMwithMarkedPoints}
The moduli space $\Delta_{g,n}$ is the geometric realization of a symmetric $\Delta$-complex.
\end{prop}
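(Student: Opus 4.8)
The plan is to exhibit $\Delta_{g,n}$ as the geometric realization of an explicit symmetric $\Delta$-complex built directly from the combinatorial data of $\Gamma_{g,n}$. First I would define the functor $X \colon I^{\mathrm{op}} \to \mathrm{Sets}$ on objects by declaring $X_p = X([p])$ to be the set of pairs $(\textbf{G}, \omega)$ where $\textbf{G} \in \Gamma_{g,n}$ is a stable $n$-marked weighted graph with exactly $p+1$ edges and $\omega \colon \{0,1,\dots,p\} \to E(\textbf{G})$ is a bijective labeling of the edges, all taken up to isomorphism in $\Gamma_{g,n}$; equivalently, $X_p$ is the set of isomorphism classes of graphs with an ordering of their edge set. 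The point of recording an ordering rather than just the graph is that the simplex $\Delta^p$ comes with ordered barycentric coordinates $t_0, \dots, t_p$, and we want these to match up with the edge lengths.

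Next I would define the action on morphisms. Since morphisms in $I$ are generated by the permutations of $[p]$ and the coface maps $\delta^i \colon [p-1] \to [p]$, it suffices to specify $X$ on these two families and check functoriality. A permutation $\sigma \colon [p] \to [p]$ acts by precomposition on the labeling, sending $(\textbf{G}, \omega)$ to $(\textbf{G}, \omega \circ \sigma)$, which simply reindexes which edge is called the $i$-th edge. For the coface $\delta^i$, the induced pullback $X(\delta^i) \colon X_p \to X_{p-1}$ should contract the edge labeled $i$: given $(\textbf{G}, \omega)$ with $p+1$ edges, it produces $(\textbf{G}/\omega(i), \omega')$ where the remaining edges inherit their labels from $[p] \setminus \{i\} \cong [p-1]$ via the unique order-preserving bijection. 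The stability of $\Gamma_{g,n}$ guarantees that $\textbf{G}/\omega(i)$ is again a stable object of $\Gamma_{g,n}$, so this is well-defined. One then verifies the simplicial-type identities relating the $\delta^i$ and the permutations, which hold because iterated edge contraction is independent of the order of contraction and commutes appropriately with relabeling.

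With $X$ in hand, I would identify $|X|$ with $\Delta_{g,n}$ by comparing the two colimit descriptions. On the one hand, $\Delta_{g,n}$ is the subspace of volume-one tropical curves in $M^{\mathrm{trop}}_{g,n} = \varinjlim(\{\overline{C}(\textbf{G})\}, \{L_\alpha\})$; intersecting each cone $\overline{C}(\textbf{G}) = \R^{E(G)}_{\geq 0}$ with the volume-one hyperplane yields exactly the simplex on the edge set of $\textbf{G}$, and the face maps $L_\alpha$ restrict to the inclusions of faces obtained by setting edge lengths to zero, i.e.\ by contracting edges. On the other hand, the geometric realization $|X| = \left(\coprod_p X_p \times \Delta^p\right)/\sim$ glues one simplex $\Delta^p$ per ordered $p$-edge graph, with the relation $(x, \theta_* a) \sim (\theta^* x, a)$ encoding precisely edge relabeling (permutations) and face inclusion by zeroing coordinates (the $\delta^i$, matching edge contraction). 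I would show these two quotients agree by constructing the evident homeomorphism simplex-by-simplex and checking it respects both equivalence relations, so that the map descends to a continuous bijection with continuous inverse between compact-Hausdorff-type quotients.

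I expect the main obstacle to be the careful bookkeeping around automorphisms of graphs. A symmetric $\Delta$-complex is honestly a functor on $I^{\mathrm{op}}$, so a simplex $\textbf{G}$ whose automorphism group acts nontrivially on its edge set gives genuinely identified faces and a quotient-orbifold-type cell rather than an embedded simplex; this is exactly the phenomenon the \emph{symmetric} formalism is designed to capture, and it is why $\Delta_{g,n}$ is not in general a CW complex or an ordinary $\Delta$-complex. Verifying that the equivalence relation $\sim$ on $\coprod_p X_p \times \Delta^p$ matches the gluing in the tropical colimit requires tracking how $\mathrm{Aut}(\textbf{G})$ permutes the barycentric coordinates consistently on both sides, and confirming that the stability condition is preserved under every contraction so that $X$ really lands in $\Gamma_{g,n}$; once this compatibility is in place, the identification $\Delta_{g,n} \cong |X|$ follows formally from the universal property of the colimit.
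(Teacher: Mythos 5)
Your proposal is correct and follows essentially the same route as the paper: the functor you construct (edge-labeled stable graphs up to isomorphism, with permutations acting by relabeling and the coface maps acting by contracting the omitted edge) is exactly the one the paper spells out in the remark immediately following the proposition, with only the cosmetic difference that you label positions by edges, $\omega\colon [p]\to E(\textbf{G})$, where the paper uses the inverse convention $\tau\colon E(\textbf{G})\to[p]$. The identification of the two colimits via the volume-one slice of each cone $\overline{C}(\textbf{G})$ is the standard argument from the cited reference, and your emphasis on tracking $\aut(\textbf{G})$ and the preservation of stability under contraction is exactly where the real checking lies.
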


\noindent By abuse of notation, we use $\Delta_{g,n}$ to refer to both the functor and its geometric realization.
\begin{rmk}
We describe explicitly what the functor $\Delta_{g,n}$ does on objects and morphisms. The elements in the set $\Delta_{g,n}([p])$ are equivalence classes of $(\textbf{G},\tau)$ where $\textbf{G} \in \ob(\Gamma_{g,n})$ has exactly $p+1$ edges and $\tau: E(G) \to [p]$ is a bijective function, called an edge-labeling. Two edge-labelings on $\textbf{G}$ are equivalent if they are related by an automorphism of $\textbf{G}$. 

On morphisms, $\Delta_{g,n}$ sends an injective map $\theta:[q] \to [p]$ to a set map $f_\theta: \Delta_{g,n}([p]) \to \Delta_{g,n}([q])$ defined as follows. Let $(\textbf{G},\tau) \in \Delta_{g,n}([p])$. Then $f_\theta((\textbf{G},\tau)) = (\textbf{G}/S,\tau')$ where $S = \{e \in E(\textbf{G}) : \tau(e) \in [p]\backslash \im \theta\}$ and $\tau'(e) = \theta^{-1}(\tau(e))$ for $e \in E(\textbf{G}/S)$.
\end{rmk}

A subcomplex of a symmetric $\Delta$-complex is a subfunctor. The following definition allows us to produce a subcomplex of $\Delta_{g,n}$ by specifying simplices. 

\begin{de}
Let $S$ be a subset of $\ob(\Gamma_{g,n})$ and $\overline{S}$ be its closure with respect to edge contraction. Then we define the symmetric $\Delta$-complex generated by $S$ to be the following functor $\Delta_{g,n}^S$.
\begin{itemize}
    \item $\Delta_{g,n}^S$ sends an object $[p]$ to the set of equivalence classes of graphs $(\textbf{G}, \tau) \in \Delta_{g,n}([p])$ for all $\textbf{G}\in \overline{S}\subset \ob(\Gamma_{g,n})$.
    \item $\Delta_{g,n}^S$ sends a morphism $\theta:[p] \to [q]$ to the restriction of $\Delta_{g,n}(\theta)$ to $\Delta_{g,n}^S([p])$.
\end{itemize}
\label{subcomplex}
\end{de}
\noindent This is a subfunctor of $\Delta_{g,n}$ and therefore a subcomplex.

\subsection{Cellular chain complex for symmetric $\Delta$-complexes}
\label{section: cellular chain complex}
For a symmetric $\Delta$-complex $X$, \cite{CGP} defines a cellular chain complex that calculates the rational singular homology of the geometric realization $|X|$.

\begin{de}
Let $R$ be a commutative ring and write $RX_p$ for the free $R$-module spanned by the set $X_p$. The group of cellular $p$-chains $C_p(X;R)$ is \[C_p(X;R) = R^{\text{sgn}} \otimes_{RS_{p+1}} RX_p\] where $R^{\text{sgn}}$ denotes the action of $S_{p+1}$ on $R$ via the sign.
\end{de}

The boundary map $\partial:C_p(X;R) \to C_{p-1}(X;R)$ is the unique map that makes the following diagram commute:
\[\begin{tikzcd}
RX_p \arrow[r,"\sum (-1)^i(d_i)_*"] \arrow[d,twoheadrightarrow] & RX_{p-1} \arrow[d,twoheadrightarrow,"\pi"] \\
C_p(X;R) \arrow[r,"\partial"] & C_{p-1}(X;R)
\end{tikzcd}\]

\begin{rmk}
We describe a basis for the chain group $C_p(\Delta_{g,n};\Q)$. This group is the quotient of the free $\Q$-vector space on the set $\{(\G,\tau):\G \in \ob(\Gamma_{g,n}), |E(\G)| = p+1, \tau: E(\G) \to [p]\}$ by the following relation. Let $\tau$ and $\tau'$ be two distinct edge-labelings on the same object $\G$. Then set $(\G, \tau) = \sgn(\pi)(\G, \tau')$ where $\pi$ is a permutation on $E(\G)$ such that $\tau = \tau' \circ \pi$. Note that since we replaced $\Gamma_{g,n}$ with a skeleton subcategory by choosing a representative for each isomorphic class of weighted stable graphs, we also have $(\G, \tau) = (\alpha(\G), \tau \circ \alpha) = (\G, \tau \circ \alpha)$ for $\alpha$ any automorphism on $\G$. The parity of $\alpha$ is defined as the parity of the permutation it induces on $E(\G)$. Denote by $[\G,\tau]$ the equivalence class of $(\G,\tau)$ in $C_p(\Delta_{g,n};\Q)$.

Consider the case when $\G$ has an odd automorphism $\alpha$. Then $[\textbf{G},\tau] = [\G,\tau\circ\alpha] = \sgn(\alpha)[\textbf{G},\tau] = -[\textbf{G},\tau]$. So $[\textbf{G},\tau] = 0$.

Therefore, a basis of $C_p(\Delta_{g,n};\Q)$ is \[B = \{[\textbf{G},\tau_\G] : \textbf{G} \in \ob(\Gamma_{g,n}), |E(\textbf{G})| = p+1, \G \text{ has no odd automorphism}\},\] where we pick a representative edge-labeling $\tau_\G$ for each $\G$.
\end{rmk}

\begin{prop}
There is an isomorphism \[H_p(C_*(\Delta_{g,n}; \Q), \partial) \cong \tilde{H}^{\text{sing}}_p(|\Delta_{g,n}|;\Q).\]
\end{prop}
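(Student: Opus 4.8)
The plan is to establish the isomorphism
\[
H_p(C_*(\Delta_{g,n};\Q),\partial) \cong \tilde{H}^{\text{sing}}_p(|\Delta_{g,n}|;\Q)
\]
by recognizing that this is a special case of a general statement about symmetric $\Delta$-complexes, so I would prove it for an arbitrary symmetric $\Delta$-complex $X$ and then specialize. Since $\Delta_{g,n}$ is a symmetric $\Delta$-complex by the preceding proposition, and the chain complex $C_*(\Delta_{g,n};\Q)$ is defined purely functorially from the functor $X = \Delta_{g,n}$, no feature particular to tropical moduli spaces is needed; the heart of the matter is a comparison between a cellular chain complex built from the coinvariants $R^{\text{sgn}}\otimes_{RS_{p+1}}RX_p$ and the singular (or simplicial) homology of the geometric realization $|X|$.

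First I would set up the standard simplicial machinery. The geometric realization $|X|$ carries a CW structure whose open cells are indexed by the $S_{p+1}$-orbits of nondegenerate $p$-simplices of $X$. I would compute the cellular homology of $|X|$ and compare it with the given complex $C_*(X;\Q)$. The key point is that over $\Q$ the functor of coinvariants $(-)\otimes_{\Q S_{p+1}}\Q^{\text{sgn}}$ is exact, so taking the sign-coinvariants commutes with homology. Concretely, I would consider the complex $\Q X_\bullet$ with differential $\sum_i (-1)^i (d_i)_*$ together with the surjection $\pi\colon \Q X_p \twoheadrightarrow C_p(X;\Q)$, and verify that $\partial$ is well defined by the commuting square in the excerpt, namely that $\sum_i(-1)^i(d_i)_*$ descends to the quotient. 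This descent is exactly the compatibility of the face maps with the $S_{p+1}$-actions relating degree $p$ and degree $p-1$, which is built into the functoriality of $X$ on the morphisms of $I$. I would then identify $C_*(X;\Q)$ with a complex computing the rational homology of $|X|$ relative to a basepoint, accounting for the reduced homology by the $[p]=\emptyset$ object, so that the augmentation is handled correctly and gives $\tilde{H}$ rather than $H$.

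The core technical step is handling simplices with symmetries. A $p$-simplex $(\bd{G},\tau)$ admitting an odd automorphism contributes zero to $C_p(X;\Q)$, as the preceding remark shows, and geometrically such a cell is folded onto itself in an orientation-reversing way, so it contributes no homology either. For cells with only even automorphisms, the orientation is well defined and the cellular boundary agrees with $\partial$ up to the sign coming from the $(-1)^i$ in the face maps and the chosen orientations. I would make this precise by invoking the comparison between the geometric realization of a symmetric $\Delta$-complex and that of its underlying semisimplicial set, using that over $\Q$ one may pass freely between invariants and coinvariants under the finite group actions. The cleanest route is to cite the general result of \cite{CGP}, where this chain complex is introduced precisely to compute $\tilde{H}_*(|X|;\Q)$ for any symmetric $\Delta$-complex $X$; the proposition here is then immediate from that general theorem applied to $X = \Delta_{g,n}$.

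The main obstacle I anticipate is the careful bookkeeping of signs and orientations in the presence of automorphisms: one must check that the $S_{p+1}$-equivariance is compatible in a way that makes $\partial^2 = 0$ pass to the quotient and that the resulting complex computes \emph{reduced} rather than unreduced homology. This sign analysis, rather than any deep topology, is the crux, and it is precisely the content already established in \cite{CGP}. For the purposes of this paper I would therefore present the statement as a direct consequence of the general theory in \cite{CGP}, specialized to the symmetric $\Delta$-complex $\Delta_{g,n}$, rather than reproving the general comparison theorem from scratch.
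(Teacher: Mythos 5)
Your proposal is correct and ultimately takes the same route as the paper: the paper's entire proof is a citation of \cite[Proposition 3.9]{CGP}, preceded only by the observation that $\Delta_{g,n}([-1])$ is a singleton (the unique edgeless object of $\Gamma_{g,n}$), which is the hypothesis ensuring the cellular complex computes \emph{reduced} singular homology. The one small difference is that the paper makes this singleton check explicit, whereas you flag the augmentation issue in general terms without verifying it for $\Delta_{g,n}$.
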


\begin{proof}
$\Delta_{g,n}([-1])$ is the unique object in $\Gamma_{g,n}$ that has no edge. Since it is a singleton, the isomorphism follows from \cite[Proposition 3.9]{CGP}.
\end{proof}

\cite{CGPTopOfMwithMarkedPoints} further introduces a relative cellular homology for a pair of symmetric $\Delta$-complexes. Let $X$ be a subcomplex of a symmetric $\Delta$-complex $Y$ with $\iota:X \to Y$ the inclusion map. Define a relative cellular chain complex $C_*(Y,X;\Q)$ by the short exact sequences \[0 \to C_p(X;\Q) \xrightarrow{\iota_p} C_p(Y;\Q) \to C_p(Y,X;\Q) \to 0 \] for all $p\geq -1$, where $\iota_p$ is the induced inclusion map on the cellular chain groups.

\begin{prop}[\cite{CGPTopOfMwithMarkedPoints}]
\label{prop: relative hom for sym delta complexes}
Let $Y$ be a symmetric $\Delta$-complex and $X \subset Y$ a subcomplex. If $\iota_{-1} : X_{-1} \to Y_{-1}$ is a bijection, then we get a natural isomorphism \[H_p(C_*(Y,X)) \cong H_p(|Y|, |X|;\Q).\]
\end{prop}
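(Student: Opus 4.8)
The plan is to compare the long exact sequence coming from the defining short exact sequence of relative chains with the long exact sequence of the topological pair $(|Y|,|X|)$, and then invoke the five lemma. First I would check that
\[0 \to C_p(X;\Q) \xrightarrow{\iota_p} C_p(Y;\Q) \to C_p(Y,X;\Q) \to 0\]
is genuinely a short exact sequence of chain complexes: since $X$ is a subfunctor, $X_p \hookrightarrow Y_p$ is an $S_{p+1}$-equivariant injection, and because $\Q S_{p+1}$ is semisimple the functor $\Q^{\sgn}\otimes_{\Q S_{p+1}}(-)$ is exact, so $\iota_p$ is injective with quotient exactly $C_p(Y,X;\Q)$. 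Taking homology yields the algebraic long exact sequence
\[\cdots \to H_p(C_*(X)) \to H_p(C_*(Y)) \to H_p(C_*(Y,X)) \xrightarrow{\delta} H_{p-1}(C_*(X)) \to \cdots.\]
The hypothesis that $\iota_{-1}$ is a bijection enters here: it makes $C_{-1}(X)\to C_{-1}(Y)$ an isomorphism, so $C_{-1}(Y,X)=0$ and the relative complex is concentrated in degrees $\geq 0$.

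On the topological side, $|X|$ is a subspace of $|Y|$, so the pair $(|Y|,|X|)$ has its long exact sequence in reduced singular homology,
\[\cdots \to \tilde{H}_p(|X|) \to \tilde{H}_p(|Y|) \to H_p(|Y|,|X|) \xrightarrow{\partial} \tilde{H}_{p-1}(|X|) \to \cdots.\]
Next I would identify the absolute terms using the isomorphism of the preceding Proposition, which for a general symmetric $\Delta$-complex is \cite[Proposition 3.9]{CGP}, applied to $X$ and to $Y$ separately. Here the bijectivity of $\iota_{-1}$ is again what makes the two reduced theories compatible: the degree $-1$ augmentation is carried by the same set for $X$ and for $Y$, so the identifications $H_p(C_*(X))\cong \tilde{H}_p(|X|)$ and $H_p(C_*(Y))\cong \tilde{H}_p(|Y|)$ sit in one compatible reduced framework.

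I would then assemble these into a commutative ladder between the two long exact sequences, with vertical maps the absolute isomorphisms on the outer terms and the induced comparison map $H_p(C_*(Y,X)) \to H_p(|Y|,|X|)$ in the middle. Commutativity of the squares built from $\iota$ follows from naturality of the absolute isomorphism with respect to the inclusion $X \hookrightarrow Y$. Once the ladder commutes, the five lemma forces the middle vertical map to be an isomorphism for every $p$, which is the assertion, and this map is natural by construction.

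The main obstacle is the construction of this ladder, and it has two parts. First, one must know that the isomorphism of the absolute Proposition is natural in the symmetric $\Delta$-complex, i.e.\ compatible with the maps induced by subcomplex inclusions; this is what makes the two $\iota$-squares commute, and it is implicit in the chain-level construction of that isomorphism in \cite{CGP}. Second, and more delicate, one must check that the algebraic connecting map $\delta$ agrees with the topological boundary $\partial$ under these identifications. Since both connecting maps are ultimately induced by the same cellular boundary operator $\sum(-1)^i(d_i)_*$, I would verify this compatibility at the chain level, tracing a relative cycle through the snake-lemma construction and through the gluing in the geometric realization; this reduces the agreement of $\delta$ and $\partial$ to the naturality already invoked. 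With the ladder in place, the five lemma completes the argument.
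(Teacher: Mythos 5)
Your argument is correct and is the standard one: the paper itself states this proposition as a citation to \cite{CGPTopOfMwithMarkedPoints} and gives no proof, but the five-lemma comparison of the two long exact sequences, anchored by the absolute isomorphism of \cite[Proposition 3.9]{CGP} and the observation that bijectivity of $\iota_{-1}$ kills $C_{-1}(Y,X)$, is exactly how the cited result is established. You also correctly isolate the two points that carry the real content --- flatness of $\Q^{\sgn}$ over the semisimple ring $\Q S_{p+1}$ (so the defining sequences are genuinely short exact) and the naturality of the absolute isomorphism, which is what makes both the $\iota$-squares and the connecting-map squares commute --- so nothing essential is missing.
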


\noindent Specializing to our case, we take $Y = \Delta^S_{g,n}$ for some nonempty $S \subset \ob(\Gamma_{g,n})$, as defined in Definition \ref{subcomplex}, and $X = \Delta_{g,n}$. Then Proposition \ref{prop: relative hom for sym delta complexes} implies \[H_p(C_*(\Delta_{g,n},\Delta^S_{g,n})) \cong H_p(|\Delta_{g,n}|,|\Delta^S_{g,n}|;\Q).\]


\section{Computing the homology representation of $\Delta_{2,n}$}
\label{computation}

\subsection{Homology of $\Delta_{2,n}$}

Our goal for this section is to describe the cellular chain complex we use to compute the rational homology of $\Delta_{2,n}$. We identify several subcomplexes with zero reduced rational homology and use the relative chain complex defined in Section \ref{section: cellular chain complex} to simplify the calculation. Some simplifications only apply for $n\geq 4$, but the cases $n < 4$ are small enough to handle by hand. Recall that a bridge in a connected graph is an edge whose removal disconnects the graph.

\begin{de}
The bridge locus $\Delta^{\text{br}}_{g,n}$ in $\Delta_{g,n}$ is the subcomplex generated by all graphs in $\Gamma_{g,n}$ that have bridges, as defined in Definition \ref{subcomplex}.
\end{de}

\begin{rmk}
The set of graphs with bridges is not closed under edge contraction, so the set of simplices of $\Delta^{\text{br}}_{g,n}$ also contains graphs without bridges. By abuse of language, we also call the closure of the set of graphs with bridges under edge contraction in $\ob(\Gamma_{g,n})$ the bridge locus. In Figure \ref{comboTypes}, all combinatorial types but type I are in the bridge locus.
\end{rmk}

\begin{rmk}
The subcomplex $\Delta^{\text{br}}_{g,n}$ is contractible for all $n\geq 0$ by \cite{CGPTopOfMwithMarkedPoints}. Therefore we have an isomorphism $\tilde{H}_p(\Delta_{g,n};\Q) \cong H_p(\Delta_{g,n},\Delta^{\text{br}}_{g,n};\Q)$.
\label{rmk: bridge locus is contractible}
\end{rmk}

\noindent From this point on, we focus on the case when $g=2$.

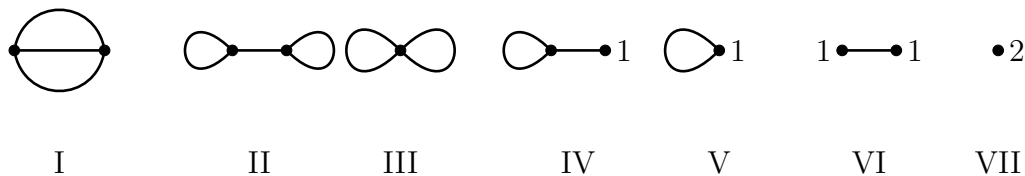
\begin{figure}
    \centering
    \begin{minipage}[t]{.12\textwidth}
        \begin{tikzpicture}[line cap=round,line join=round,x=1.2cm,y=1.2cm]
        \draw [line width=1.pt] (-0.5,0) -- (0.5,0);
        \draw [fill=black] (-0.5,0) circle (2pt);
        \draw [fill=black] (0.5,0) circle (2pt);
        \draw [line width=1.pt] (-0.5,0) .. controls (-0.4,0.6) and (0.4, 0.6) .. (0.5,0);
        \draw [line width=1.pt] (-0.5,0) .. controls (-0.4,-0.6) and (0.4, -0.6) .. (0.5,0);
        \draw (0,-1) node[anchor=north] {I};
        \end{tikzpicture}
    \end{minipage}
    \begin{minipage}[t]{.12\textwidth}
        \begin{tikzpicture}[line cap=round,line join=round,x=1.2cm,y=1.2cm]
        \draw [line width=1.pt] (-0.3,0) -- (0.3,0);
        \draw [fill=black] (-0.3,0) circle (2pt);
        \draw [fill=black] (0.3,0) circle (2pt);
        \draw [line width=1.pt] (-0.3,0) .. controls (-1,0.7) and (-1, -0.7) .. (-0.3,0);
        \draw [line width=1.pt] (0.3,0) .. controls (1,0.7) and (1, -0.7) .. (0.3,0);
        \draw (0,-1) node[anchor=north] {II};
        \end{tikzpicture}
    \end{minipage}
    \begin{minipage}[t]{.12\textwidth}
        \begin{tikzpicture}[line cap=round,line join=round,x=1.2cm,y=1.2cm]
        \draw [fill=black] (0,0) circle (2pt);
        \draw [line width=1.pt] (0,0) .. controls (-0.8,0.8) and (-0.8, -0.8) .. (0,0);
        \draw [line width=1.pt] (0,0) .. controls (0.8,0.8) and (0.8, -0.8) .. (0,0);
        \draw (0,-1) node[anchor=north] {III};
        \end{tikzpicture}
    \end{minipage}
    \begin{minipage}[t]{.12\textwidth}
        \begin{tikzpicture}[line cap=round,line join=round,x=1.2cm,y=1.2cm]
        \draw [line width=1.pt] (-0.3,0) -- (0.3,0);
        \draw [fill=black] (-0.3,0) circle (2pt);
        \draw [fill=black] (0.3,0) circle (2pt);
        \draw [line width=1.pt] (-0.3,0) .. controls (-1,0.7) and (-1, -0.7) .. (-0.3,0);
        \draw (0.3,0) node[anchor=west] {1};
        \draw (0,-1) node[anchor=north] {IV};
        \end{tikzpicture}
    \end{minipage}
    \begin{minipage}[t]{.12\textwidth}
        \begin{tikzpicture}[line cap=round,line join=round,x=1.2cm,y=1.2cm]
        \draw [fill=black] (0,0) circle (2pt);
        \draw [line width=1.pt] (0,0) .. controls (-0.8,0.8) and (-0.8, -0.8) .. (0,0);
        \draw (0,0) node[anchor=west] {1};
        \draw (0,-1) node[anchor=north] {V};
        \end{tikzpicture}
    \end{minipage}
    \begin{minipage}[t]{.12\textwidth}
        \begin{tikzpicture}[line cap=round,line join=round,x=1.2cm,y=1.2cm]
        \draw [line width=1.pt] (-0.3,0) -- (0.3,0);
        \draw [fill=black] (-0.3,0) circle (2pt);
        \draw [fill=black] (0.3,0) circle (2pt);
        \draw (-0.3,0) node[anchor=east] {1};
        \draw (0.3,0) node[anchor=west] {1};
        \draw (0,-1) node[anchor=north] {VI};
        \end{tikzpicture}
    \end{minipage}
    \begin{minipage}[t]{.12\textwidth}
        \begin{tikzpicture}[line cap=round,line join=round,x=1.2cm,y=1.2cm]
        \draw [fill=black] (0,0) circle (2pt);
        \draw (0,0) node[anchor=west] {2};
        \draw (0,-1) node[anchor=north] {VII};
        \end{tikzpicture}
    \end{minipage}
    \caption{The seven unmarked types in $\ob(\Gamma_{2,n})$.}
    \label{comboTypes}
\end{figure}

\begin{de}
A graph $(G,m,w) \in \Gamma_{2,n}$ has \textbf{theta type} if it satisfies the following conditions:
\begin{itemize}
    \item its unmarked type is the theta graph, i.e., type I in Figure \ref{comboTypes};
    \item its marking function $m$ is injective.
\end{itemize} The graph is said to have \textbf{cyclic theta type} if all the markings lie on a single cycle and \textbf{full theta type} otherwise.
\label{def: theta type graph}
\end{de}

\begin{rmk}
\cite[Lemma 4.2]{ChanTopOfM2n} states that the objects in $\Gamma_{2,n}$ are the disjoint union of the bridge locus and graphs of theta type.
\end{rmk}

\begin{example}[$S_2$-equivariant homology of $\Delta_{2,2}$] By \cite[Table 1]{ChanTopOfM2n}, the only nontrivial reduced rational homology of $\Delta_{2,n}$ for $n\leq 3$ is $\tilde{H}_4(\Delta_{2,2};\Q)$. In this example, we compute by hand $\tilde{H}_4(\Delta_{2,2};\Q)$ as an $S_2$-representation using the contractibility of the bridge locus. We show that $\tilde{H}_4(\Delta_{2,2};\Q)$ is isomorphic to the trivial representation.

We use the relative cellular chain complex associated with the pair $(\Delta_{2,n}, \Delta_{2,n}^{\text{br}})$. There are four combinatorial types in $\Gamma_{2,2}$ that are theta type, as defined in Definition \ref{def: theta type graph}, shown in Figure \ref{delta22}. However, $T_2,T_3$ and $T_4$ all have odd automorphisms, so they are zero in their respective chain groups. Therefore, the only chain group that has a nontrivial element is $C_4(\Delta_{2,2},\Delta_{2,2}^\text{br};\Q) = \langle [T_1,\tau] \rangle$, where $\tau$ is any edge-labeling on $T_1$. Hence $\tilde{H}_4(\Delta_{2,2};\Q) \cong C_4(\Delta_{2,2},\Delta_{2,2}^\text{br};\Q)$. The permutation $(12) \in S_2$ induces an even automorphism on $T_1$, so $(12)$ fixes $[T_1,\tau]$, thus $S_2$ acts trivially. We conclude that $\tilde{H}_4(\Delta_{2,2};\Q) \cong \chi_2$ as $S_2$-representations.

\begin{figure}
    \centering
    \begin{minipage}[t]{.21\textwidth}
        \begin{tikzpicture}[line cap=round,line join=round,x=1cm,y=1cm]
        \draw [line width=1.pt] (-0.5,0) -- (0.5,0);
        \draw [fill=black] (-0.5,0) circle (1pt);
        \draw [fill=black] (0.5,0) circle (1pt);
        \draw [line width=1.pt] (-0.5,0) .. controls (-0.4,0.8) and (0.4, 0.8) .. (0.5,0);
        \draw [line width=1.pt] (-0.5,0) .. controls (-0.4,-0.8) and (0.4, -0.8) .. (0.5,0);
        \draw [fill=black] (0,0.6) circle (1pt);
        \draw [fill=black] (0,0) circle (1pt);
        \draw [line width=1.pt] (0,0.6) -- (0,0.9);
        \draw (0,0.9) node[anchor=south] {1};
        \draw [line width=1.pt] (0,0) -- (0,0.3);
        \draw (0,0.3) node[anchor= west] {2};
        
        \draw (0,-1) node[anchor= north] {$T_1$};
        \end{tikzpicture}
    \end{minipage}
    \begin{minipage}[t]{.21\textwidth}
        \begin{tikzpicture}[line cap=round,line join=round,x=1cm,y=1cm]
        \draw [line width=1.pt] (-0.5,0) -- (0.5,0);
        \draw [fill=black] (-0.5,0) circle (1pt);
        \draw [fill=black] (0.5,0) circle (1pt);
        \draw [line width=1.pt] (-0.5,0) .. controls (-0.4,0.8) and (0.4, 0.8) .. (0.5,0);
        \draw [line width=1.pt] (-0.5,0) .. controls (-0.4,-0.8) and (0.4, -0.8) .. (0.5,0);
        \draw [fill=black] (0,0.6) circle (1pt);
        \draw [line width=1.pt] (0,0.6) -- (0,0.9);
        \draw (0,0.9) node[anchor=south] {1};
        \draw [line width=1.pt] (0.5,0) -- (0.8,0);
        \draw (0.8,0) node[anchor= west] {2};
        
        \draw (0,-1) node[anchor= north] {$T_2$};
        \end{tikzpicture}
    \end{minipage}
    \begin{minipage}[t]{.21\textwidth}
        \begin{tikzpicture}[line cap=round,line join=round,x=1cm,y=1cm]
        \draw [line width=1.pt] (-0.5,0) -- (0.5,0);
        \draw [fill=black] (-0.5,0) circle (1pt);
        \draw [fill=black] (0.5,0) circle (1pt);
        \draw [line width=1.pt] (-0.5,0) .. controls (-0.4,0.8) and (0.4, 0.8) .. (0.5,0);
        \draw [line width=1.pt] (-0.5,0) .. controls (-0.4,-0.8) and (0.4, -0.8) .. (0.5,0);
        \draw [fill=black] (0,0.6) circle (1pt);
        \draw [line width=1.pt] (0,0.6) -- (0,0.9);
        \draw (0,0.9) node[anchor=south] {2};
        \draw [line width=1.pt] (0.5,0) -- (0.8,0);
        \draw (0.8,0) node[anchor= west] {1};
        
        \draw (0,-1) node[anchor= north] {$T_3$};
        \end{tikzpicture}
    \end{minipage}
    \begin{minipage}[t]{.21\textwidth}
        \begin{tikzpicture}[line cap=round,line join=round,x=1cm,y=1cm]
        \draw [line width=1.pt] (-0.5,0) -- (0.5,0);
        \draw [fill=black] (-0.5,0) circle (1pt);
        \draw [fill=black] (0.5,0) circle (1pt);
        \draw [line width=1.pt] (-0.5,0) .. controls (-0.4,0.8) and (0.4, 0.8) .. (0.5,0);
        \draw [line width=1.pt] (-0.5,0) .. controls (-0.4,-0.8) and (0.4, -0.8) .. (0.5,0);
        \draw [line width=1.pt] (-0.8,0) -- (-0.5,0);
        \draw (-0.8,0) node[anchor=east] {1};
        \draw [line width=1.pt] (0.5,0) -- (0.8,0);
        \draw (0.8,0) node[anchor= west] {2};
        
        \draw (0,-1) node[anchor= north] {$T_4$};
        \end{tikzpicture}
    \end{minipage}
    \caption{Four combinatorial types in $\ob(\Gamma_{2,2})$ whose unmarked type is theta.}
    \label{delta22}
\end{figure}
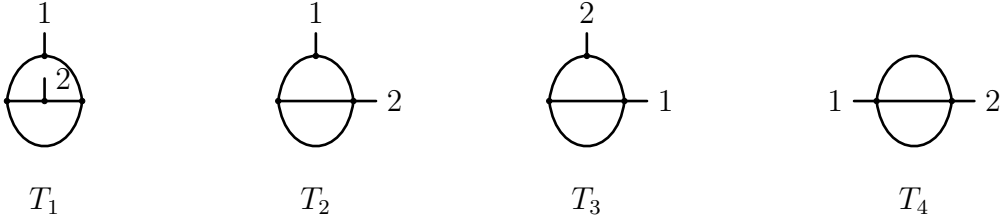

\end{example}

For $n\geq 4$, we recall another subcomplex of $\Delta_{2,n}$ that has zero reduced rational homology. Let $\theta^\circ$ denote the union of the bridge locus and graphs of cyclic theta type in $\ob(\Gamma_{2,n})$. Let $\Delta_{2,n}^{\theta^\circ}$ be the subcomplex generated by $\theta^\circ$.

\begin{thm}[\cite{ChanTopOfM2n}]
For $n\geq 4$, the reduced integral homology of $\Delta_{2,n}^{\theta^\circ}$ is \[\tilde{H}_i(\Delta_{2,n}^{\theta^\circ};\Z) = \begin{cases} 
(\Z/2\Z)^{\frac{(n-1)!}{2}} & i=n+1 \\
0 & \text{else}\end{cases}.\]
\label{cyclicTheta}
\end{thm}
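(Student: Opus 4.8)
The plan is to reduce to a relative cellular computation and then analyze the resulting complex of cyclic theta type cells directly. Since $\Delta^{\text{br}}_{2,n}$ sits inside $\Delta_{2,n}^{\theta^\circ}$ and is contractible (Remark \ref{rmk: bridge locus is contractible}), its reduced integral homology vanishes, so the long exact sequence of the pair yields $\tilde{H}_i(\Delta_{2,n}^{\theta^\circ};\Z) \cong H_i(\Delta_{2,n}^{\theta^\circ}, \Delta^{\text{br}}_{2,n};\Z)$ for all $i$. I would compute the right-hand side using the integral refinement of the cellular comparison of \cite{CGP}, working with the relative integral cellular chain complex. By the description of the chain groups, a basis for this complex is indexed by the graphs of cyclic theta type carrying no odd automorphism, with the usual sign relations coming from edge-labelings; all cells of $\theta^\circ$ lying in the bridge locus are killed in the quotient.

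Next I would fix the combinatorial model of these generators. A cyclic theta type graph is a circle, assembled from the two arcs $a,b$ of the theta graph meeting at the trivalent vertices $u,v$ and carrying all $n$ markings, together with the third edge $c$ (the chord) joining $u$ and $v$. The subgroup of $\aut$ of the underlying theta graph fixing $c$ is a Klein four-group, generated by the arc-swap $a\leftrightarrow b$ and the vertex-swap $u\leftrightarrow v$; a generator vanishes exactly when one of these acts as an odd permutation on the edge set while respecting the markings. The decisive step is to sort the facets of a top cell under the boundary map: contracting the chord $c$ produces a figure-eight graph (type III in Figure \ref{comboTypes}), while contracting an edge between two adjacent markings produces a non-injective marking, and both outcomes lie in the bridge locus and hence vanish. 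The only surviving facets are obtained by contracting an edge joining a marking to a trivalent vertex $u$ or $v$, thereby absorbing exactly one marking onto that vertex while staying of cyclic theta type.

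The computation then hinges on the incidence structure of these surviving facets. A codimension-one cell in which one marking sits on a trivalent vertex is a facet of exactly two top cells, according to whether that marking is slid onto the arc-$a$ side or the arc-$b$ side of the vertex, and the chord-fixing symmetry forces these two incidences to induce the \emph{same} orientation. Thus the top boundary map acts as multiplication by $2$ on the relevant sublattice. On the one hand this makes the complex rationally exact, matching the intended application that $\Delta_{2,n}^{\theta^\circ}$ carries no reduced rational homology; on the other hand it contributes the torsion group $(\Z/2\Z)^{(n-1)!/2}$ in homological degree $n+1$, one below the top dimension $n+2$. The index set is the collection of dihedral classes of cyclic orderings of the $n$ markings around the circle, which is precisely the gadget producing the $\tfrac{(n-1)!}{2}$ spheres of $\Delta_{1,n}$ in \cite{CGP}, and this is where the factor $\tfrac{(n-1)!}{2}$ enters.

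The main obstacle is the orientation bookkeeping that converts this picture into a proof. One must verify that the two top cells bounding each surviving facet genuinely agree in sign, so that the boundary is multiplication by $2$ and not $0$, which requires tracking how edge-labelings transform under the Klein four-group of theta automorphisms and excluding spurious cancellations. Equally one must prove acyclicity in every degree other than $n+1$, both rationally and integrally; I would establish this by filtering the generators according to how many markings lie on the trivalent vertices $u,v$ and building an acyclic discrete Morse matching on the strata where a marking can be freely pushed off a trivalent vertex, leaving only the degree-$(n+1)$ classes. The hypothesis $n\geq 4$ is what guarantees that both arcs can carry markings and that the generic incidence count of two is attained, so the torsion-producing mechanism is uniform.
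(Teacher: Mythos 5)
The paper you are reading does not actually prove this statement---it is imported wholesale from \cite{ChanTopOfM2n}---so your proposal has to be measured against Chan's argument there, and measured that way it has a genuine gap at the very first step. The cellular chain complex $C_p(X;R)=R^{\mathrm{sgn}}\otimes_{RS_{p+1}}RX_p$ of \cite{CGP} is only shown to compute the homology of $|X|$ when the orders of the groups $S_{p+1}$ are invertible in $R$; the comparison uses averaging over the simplex automorphisms and there is no ``integral refinement'' available to invoke. Concretely, over $\Z$ these chain groups are not even free: a $p$-cell admitting an odd automorphism contributes a $\Z/2\Z$ summand to $C_p(X;\Z)$ rather than dying, so the basis you describe is the rational one, not an integral one, and small examples (a $1$-simplex with both endpoints identified and carrying its flip automorphism, whose realization is contractible while the integral cellular complex has $H_1=\Z/2\Z$) show that $H_*(C_*(X;\Z))$ can differ from $H_*(|X|;\Z)$ exactly in its $2$-torsion. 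Since the entire content of the theorem is $2$-torsion, your computation is being run in precisely the regime where the machinery you rely on is known to fail.

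Secondarily, even granting an integral chain model, the two load-bearing claims---that each surviving codimension-one cell receives incidence $\pm2$ with coherent signs from its two top cofaces, and that the complex is acyclic in every degree other than $n+1$---are asserted rather than established; the discrete Morse matching is named but not constructed, and the passage from ``boundary is multiplication by $2$ on a sublattice'' to ``exactly $(n-1)!/2$ copies of $\Z/2\Z$ in degree $n+1$'' requires identifying the rank of that sublattice, which your counting of dihedral classes does not yet do. Chan's proof avoids all of this by working topologically rather than at the level of signed cellular chains: she analyzes the quotient space $\Delta^{\theta^\circ}_{2,n}/\Delta^{\mathrm{br}}_{2,n}$ directly, decomposing it according to the $(n-1)!/2$ dihedral classes of cyclic orderings of the markings and identifying each piece with an explicit quotient whose reduced integral homology is that of an $n$-fold suspension of $\R P^2$; the $\Z/2\Z$ comes from the arc-swapping involution of the theta graph (your Klein four-group) acting on an actual space. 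Your combinatorial picture of which facets survive and why the involution is responsible for the torsion is correct in spirit, but to turn it into a proof you would need an honest CW or equivariant model of the quotient space, not the symmetric $\Delta$-complex cellular chain complex with $\Z$ coefficients.
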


\noindent Therefore $\tilde{H}_i(\Delta_{2,n}^{\theta^\circ};\Q)$ is trivial. The next proposition establishes a further simplification in the computation of $\tilde{H}_i(\Delta_{2,n};\Q)$.\\

\begin{prop}
There is an $S_n$-equivariant isomorphism $\tilde{H}_i(\Delta_{2,n};\Q) \cong H_i(C_*(\Delta_{2,n}, \Delta_{2,n}^{\theta^\circ};\Q))$ where $C_*(\Delta_{2,n}, \Delta_{2,n}^{\theta^\circ};\Q)$ is the relative cellular chain complex of the pair $(\Delta_{2,n}, \Delta_{2,n}^{\theta^\circ})$ of symmetric $\Delta$-complexes.
\label{prop: H(Delta2n)=H(Delta2n, cyclic theta locus)}
\end{prop}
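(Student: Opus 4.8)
The plan is to deduce the statement from three ingredients already available: the $S_n$-invariance of the subcomplex $\Delta_{2,n}^{\theta^\circ}$, the rational vanishing of its reduced homology coming from Theorem \ref{cyclicTheta}, and the cellular identification of relative homology in Proposition \ref{prop: relative hom for sym delta complexes}. I work in the range $n\geq 4$ where $\Delta_{2,n}^{\theta^\circ}$ is defined; the cases $n<4$ are handled by hand, as in the computation of $\Delta_{2,2}$ above. First I would observe that $\Delta_{2,n}^{\theta^\circ}$ is an $S_n$-subcomplex of $\Delta_{2,n}$: relabeling markings preserves both the bridge locus and the property that all markings lie on a single cycle, so the generating set $\theta^\circ$, and hence its edge-contraction closure, is $S_n$-stable. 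Thus the inclusion $|\Delta_{2,n}^{\theta^\circ}|\hookrightarrow|\Delta_{2,n}|$ is $S_n$-equivariant.

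Next I would invoke the reduced long exact sequence of the pair, which is a long exact sequence of rational $S_n$-representations:
\[\cdots \to \tilde{H}_i(\Delta_{2,n}^{\theta^\circ};\Q) \to \tilde{H}_i(\Delta_{2,n};\Q) \to H_i(|\Delta_{2,n}|,|\Delta_{2,n}^{\theta^\circ}|;\Q) \to \tilde{H}_{i-1}(\Delta_{2,n}^{\theta^\circ};\Q) \to \cdots\]
By Theorem \ref{cyclicTheta} the reduced integral homology of $\Delta_{2,n}^{\theta^\circ}$ is $2$-torsion, so the universal coefficient theorem gives $\tilde{H}_i(\Delta_{2,n}^{\theta^\circ};\Q)=0$ for all $i$. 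The outer terms of the sequence therefore vanish, producing an $S_n$-equivariant isomorphism $\tilde{H}_i(\Delta_{2,n};\Q)\cong H_i(|\Delta_{2,n}|,|\Delta_{2,n}^{\theta^\circ}|;\Q)$ for every $i$.

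To finish, I would apply Proposition \ref{prop: relative hom for sym delta complexes} to the pair $(\Delta_{2,n},\Delta_{2,n}^{\theta^\circ})$. Its hypothesis requires $\iota_{-1}$ to be a bijection on $(-1)$-simplices, and this holds because $\Delta_{2,n}([-1])$ and $\Delta_{2,n}^{\theta^\circ}([-1])$ each consist solely of the unique edgeless graph. This yields $H_i(|\Delta_{2,n}|,|\Delta_{2,n}^{\theta^\circ}|;\Q)\cong H_i(C_*(\Delta_{2,n},\Delta_{2,n}^{\theta^\circ};\Q))$, and composing the two isomorphisms gives the result. The only point requiring care is equivariance of this last identification: the relative cellular chain complex is itself a complex of $S_n$-representations, since $S_n$ permutes the basis classes $[\G,\tau]$ by relabeling markings and commutes with $\partial$, and the comparison map of Proposition \ref{prop: relative hom for sym delta complexes} is induced by a natural chain map, so it is automatically $S_n$-equivariant. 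I do not expect a genuine obstacle here; the substantive input is the torsion computation of Theorem \ref{cyclicTheta}, which is cited.
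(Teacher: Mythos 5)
Your proposal is correct and follows essentially the same route as the paper: the long exact sequence of the pair combined with the rational vanishing of $\tilde{H}_*(\Delta_{2,n}^{\theta^\circ};\Q)$ from Theorem \ref{cyclicTheta}, followed by Proposition \ref{prop: relative hom for sym delta complexes} applied via the observation that the $[-1]$-simplices of the two complexes coincide. You simply spell out a few details the paper leaves implicit (the universal coefficient step, $S_n$-stability of the subcomplex, and equivariance of the cellular comparison map).
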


\begin{proof}
We obtain that $\tilde{H}_i(\Delta_{2,n};\Q) \cong H_i(\Delta_{2,n}, \Delta_{2,n}^{\theta^\circ};\Q)$ by considering the long exact sequence associated to the pair. Furthermore, $\Delta_{2,n}([-1])$ is the unique combinatorial type in $\Gamma_{2,n}$ that has one vertex and $\Delta^{\theta^\circ}_{2,n}([-1]) = \Delta_{2,n}([-1])$. So by \cite[Proposition 3.6]{CGPTopOfMwithMarkedPoints}, there is an isomorphism $H_i(\Delta_{2,n},\Delta_{2,n}^{\theta^\circ};\Q) \cong H_i(C_*(\Delta_{2,n}, \Delta_{2,n}^{\theta^\circ}; \Q))$. Both isomorphisms are $S_n$-equivariant.
\label{homIsomorphism}
\end{proof}

\begin{rmk}
The cellular chain group $C_p(\Delta_{2,n}, \Delta_{2,n}^{\theta^\circ};\Q)$ has a basis $\mathcal{B} = \{[\G,\tau]: \G \in \ob(\Gamma_{2,n}), |E(\G)|=p+1, \G \text{ has full theta type, and } \tau:E(\G)\to [p]$ is any fixed edge labeling of $\G\}$. These chain groups serve as inputs to our computation.
\label{remark: relChainGroupBasis}
\end{rmk}
\noindent The cellular chain complex of the pair $(\Delta_{2,n}, \Delta_{2,n}^{\theta^\circ})$ turns out to have very few nonzero terms, as seen in the next proposition.
\begin{lem}\cite{ChanTopOfM2n}
$C_*(\Delta_{2,n}, \Delta_{2,n}^{\theta^\circ};\Q)$ is nontrivial in only three degrees: $n+2, n+1$, and $n$.
\end{lem}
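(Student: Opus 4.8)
The plan is to read the answer directly off the basis of the relative chain groups recorded in Remark \ref{remark: relChainGroupBasis}: a basis of $C_p(\Delta_{2,n}, \Delta_{2,n}^{\theta^\circ};\Q)$ is indexed by suitably labeled graphs $\G \in \ob(\Gamma_{2,n})$ of full theta type having exactly $p+1$ edges. Hence $C_p$ can be nonzero only when some full theta type graph has $p+1$ edges, and it suffices to show that every full theta type graph has between $n+1$ and $n+3$ edges, and that each of these three edge counts is realized by an automorphism-free graph.

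First I would pin down the shape of a full theta type graph $\G=(G,w,m)$. By Definition \ref{def: theta type graph} its unmarked type is the theta graph (type I of Figure \ref{comboTypes}), which has first Betti number $2$ and no positively weighted vertices; since the unmarked type already accounts for the full genus $2 = b_1(G)+\sum_v w(v)$, every vertex of $\G$ has weight $0$. Moreover theta type graphs carry no bridges (they are disjoint from the bridge locus by \cite[Lemma 4.2]{ChanTopOfM2n}), so $G$ is precisely a subdivision of the theta graph: two distinct trivalent vertices $u,v$ joined by three internally disjoint arcs, each arc possibly broken up by additional valence-$2$ vertices.

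Next I would count edges using stability together with the injectivity of $m$. A weight-$0$, valence-$2$ vertex $x$ satisfies $2w(x)-2+\val(x)+|m^{-1}(x)|>0$ only if $|m^{-1}(x)|\geq 1$, so every subdivision vertex is marked; injectivity of $m$ then forces exactly one mark on each subdivision vertex and at most one on each of $u,v$. Writing $t\in\{0,1,2\}$ for the number of marked trivalent vertices, the number of subdivision vertices is $n-t$, and if arc $i$ carries $k_i$ of them it contributes $k_i+1$ edges, so
\[
|E(\G)| = \sum_{i=1}^{3}(k_i+1) = 3 + \sum_{i=1}^{3}k_i = 3+(n-t) = n+3-t.
\]
As $t$ ranges over $\{0,1,2\}$ this gives $|E(\G)|\in\{n+1,n+2,n+3\}$, whence $C_p=0$ unless $p+1\in\{n+1,n+2,n+3\}$, i.e. $p\in\{n,n+1,n+2\}$; this is the degree bound, and it is forced purely by the stability/injectivity bookkeeping with no reference to automorphisms.

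Finally, for nonvanishing in each degree I would exhibit a full theta graph for each value of $t$. Being full (rather than cyclic) theta type means every arc carries at least one mark, since otherwise all marks lie on the cycle formed by the remaining two arcs, placing $\G$ in $\theta^\circ$; thus $\sum k_i = n-t \geq 3$ is required. Choosing the $k_i\geq 1$ and the labels generically kills every nontrivial symmetry — any automorphism must fix each distinctly marked subdivision vertex, and with at least one mark pinning each arc this already rules out arc permutations and the $u\leftrightarrow v$ swap — so the automorphism group is trivial and $[\G,\tau]$ is a genuine basis vector. This realizes all three degrees once $n\geq 5$; the one point needing care is exactly this realizability check, whereas the support statement is automatic from the edge count. (For $n=4$ the constraint $n-t\geq 3$ excludes $t=2$, so the complex is supported only in the top two degrees $n+1,n+2$, consistent with Table \ref{decomposition}.)
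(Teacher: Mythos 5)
Your argument is correct and follows essentially the same route as the paper: both proofs reduce to Remark \ref{remark: relChainGroupBasis} and then count edges of a (full) theta type graph as a marked subdivision of the theta graph, with injectivity of $m$ bounding the number of marks placed on the two trivalent vertices by $2$, giving $|E(\G)| \in \{n+1, n+2, n+3\}$. Your additional realizability check (and the observation that the degree-$n$ group actually vanishes for $n=4$) goes beyond what the paper proves, which only establishes the stated upper bound on the support.
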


\begin{proof}
By Remark \ref{remark: relChainGroupBasis}, it suffices to show that graphs of full theta type can have only $n+3$, $n+2$, or $n+1$ edges.

All graphs of theta type can be obtained from adding a marking function $m$ to the theta graph $\Theta$ that has two distinct vertices and three edges between them in the following way: for $i \in [n]$, either let $m(i)$ be an existing vertex in $\Theta$, or subdivide an existing edge to create a new vertex $v_{\text{new}}$ and let $m(i) = v_{\text{new}}$. Since $\Theta$ has two vertices and three edges, a graph of theta type has $n+1$, $n+2$ or $n+3$ edges.

\end{proof}

\begin{rmk}
So the chain complex $C_*(\Delta_{2,n}, \Delta_{2,n}^{\theta^\circ};\Q)$ is simplified to \[0 \to C_{n+2}(\Delta_{2,n}, \Delta_{2,n}^{\theta^\circ};\Q) \xrightarrow{d_{n+2}} C_{n+1}(\Delta_{2,n}, \Delta_{2,n}^{\theta^\circ};\Q) \xrightarrow{d_{n+1}} C_n(\Delta_{2,n}, \Delta_{2,n}^{\theta^\circ};\Q) \to 0.\]
\label{shortChainComplex}
\end{rmk}

\noindent Moreover, \cite{ChanTopOfM2n} shows the following proposition.

\begin{prop}
The boundary map $d_{n+1}$ in the chain complex in Remark \ref{shortChainComplex} is surjective. Hence the rational homology of $\Delta_{2,n}$ is only nontrivial in degrees $n+1$ and $n+2$.
\end{prop}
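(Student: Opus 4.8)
The plan is to prove surjectivity of $d_{n+1}$ by exhibiting, for each basis element $[\G,\tau]$ of the target $C_n(\Delta_{2,n},\Delta_{2,n}^{\theta^\circ};\Q)$, an explicit preimage in $C_{n+1}$. By Remark \ref{remark: relChainGroupBasis}, the codomain is spanned by full theta type graphs with exactly $n+1$ edges. A graph of theta type with $n+1$ edges is obtained from the theta graph $\Theta$ by letting every marking $m(i)$ land on one of the two vertices (no edge is subdivided), since each subdivision adds an edge. Thus a basis element of $C_n$ corresponds to a distribution of the $n$ markings between the two trivalent vertices of $\Theta$ (with the ``full theta'' condition ruling out configurations where all markings sit on one cycle). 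The idea is that such a graph can always be realized as the image under $d_{n+1}$ of a graph with $n+2$ edges obtained by subdividing exactly one of the three edges of $\Theta$ and placing a single marking on the new valence-two vertex.

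First I would fix a target basis element $[\G,\tau]$ and choose a marking, say the one labeled $j$, sitting at one of the two original vertices $v$ of $\G$. I would build a preimage $\widetilde{\G}$ by detaching $j$ from $v$: subdivide one of the edges incident to $v$, creating a new valence-two vertex $v_{\text{new}}$, and move the marking $j$ onto $v_{\text{new}}$. This $\widetilde{\G}$ has $n+2$ edges and is of full theta type (the new configuration is not supported on a single cycle provided $\G$ was full, which I would check case by case). I would then compute $d_{n+1}[\widetilde{\G},\tilde\tau] = \sum_e \pm [\widetilde{\G}/e, \cdot]$, the signed sum over contractions of its edges. Contracting the one newly created edge (the one joining $v_{\text{new}}$ back to $v$) recovers $\G$; the remaining terms in the boundary are either graphs of cyclic theta type or bridge type, which vanish in the relative complex, or terms whose edge-labelings force an odd automorphism and hence are zero.

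The main obstacle is controlling these extra boundary terms: after contracting an edge other than the new one, some of the resulting graphs may again be of full theta type and therefore nonzero in $C_n$, so $d_{n+1}[\widetilde{\G},\tilde\tau]$ need not equal $\pm[\G,\tau]$ on the nose. The correct strategy is therefore not a one-step argument but a filtration or triangularity argument: I would order the full-theta basis elements of $C_n$ by a suitable statistic (for instance by which of the three edges carry how many markings, or lexicographically by the marking distribution) and show that the unwanted boundary terms are strictly smaller in this order, or that they lie in the span already shown to be hit. This makes the matrix of $d_{n+1}$ (restricted appropriately) triangular with nonzero diagonal entries $\pm 1$, yielding surjectivity. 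The bookkeeping of signs and of which contracted graphs land in $\theta^\circ$ versus full theta type is the delicate part; invoking Theorem \ref{cyclicTheta} and the vanishing of odd-automorphism classes from the earlier remark lets me discard most terms, and a careful choice of which edge to subdivide in the preimage should make the triangularity transparent.
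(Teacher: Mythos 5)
First, a point of comparison: the paper does not actually prove this proposition --- it is quoted from \cite{ChanTopOfM2n} --- so there is no in-paper argument to measure your proposal against, and it must be judged on its own terms. Its general shape (exhibit an explicit preimage for each generator of $C_n(\Delta_{2,n},\Delta_{2,n}^{\theta^\circ};\Q)$ and control the extra boundary terms) is reasonable, but the setup contains a concrete error. A theta type graph has an \emph{injective} marking function (Definition \ref{def: theta type graph}), so at most two markings can sit at the two trivalent vertices of $\Theta$. A theta type graph with $n+1$ edges therefore has \emph{both} trivalent vertices marked and the remaining $n-2$ markings subdividing the three arcs, with the full theta condition forcing every arc to carry at least one interior marking. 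It is not ``a distribution of the $n$ markings between the two trivalent vertices with no edge subdivided'': such a graph would have $3$ edges and, for $n\geq 3$, a non-injective marking. Your preimage construction (slide a marking $j$ off a trivalent vertex $v$ onto a newly created subdivision vertex of an incident arc) survives this correction, and contracting the new edge does recover $\G$; but the description of the basis being hit needs to be fixed before the argument can be made precise.

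The more serious gap is that the heart of the proof is announced rather than carried out. After marking $j$ is moved off $v$, that trivalent vertex is unmarked, and contracting either of the other two edges at $v$ whose far endpoint is a marked subdivision vertex yields another full theta type graph with $n+1$ edges --- a genuinely nonzero term of $d_{n+1}[\widetilde{\G},\tilde\tau]$ in the relative complex, not something killed by landing in $\Delta_{2,n}^{\theta^\circ}$ or by an odd automorphism. You correctly identify this obstacle and propose a filtration or triangularity argument, but you never specify the ordering on the basis of $C_n$, verify that the unwanted terms are strictly smaller in that ordering, or track the signs; you explicitly defer ``the delicate part.'' Since this is exactly the content of the surjectivity claim, the proposal as written does not establish it. To complete the argument you would need to choose the statistic (e.g., the multiset recording how many interior markings each arc carries, refined by which labels sit at the trivalent vertices), prove that every extra boundary term strictly decreases it, and then conclude by induction; alternatively, one can simply defer to the proof in \cite{ChanTopOfM2n}, as the paper does.
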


\subsection{The homology groups $\tilde{H}_i(\Delta_{2,n};\Q)$ as $S_n$-representations}

The symmetric group $S_n$ acts on the rational homology $\tilde{H}_i(\Delta_{2,n};\Q)$ by permuting markings and thus induces the structure of $S_n$-representations. By Proposition \ref{homIsomorphism}, we can calculate the characters of these representations by considering the $S_n$-action on the relative cellular chain groups $C_*(\Delta_{2,n}, \Delta_{2,n}^{\theta^\circ};\Q)$. In this section, we discuss our computation of these characters.

We only compute the character $\chi(H_{n+2}(\Delta_{2,n}, \Delta_{2,n}^{\theta^\circ};\Q))$ for simplicity. The top degree homology group $H_{n+2}(\Delta_{2,n}, \Delta_{2,n}^{\theta^\circ};\Q) = \ker d_{n+2}$ is a subrepresentation of the relative chain group $C_{n+2}(\Delta_{2,n}, \Delta_{2,n}^{\theta^\circ};\Q)$, and the latter has an explicit basis given combinatorially. We then use Theorem 1.1 in \cite{CFGP} to derive $\chi(\tilde{H}_{n+1}(\Delta_{2,n};\Q))$. For details of the derivation, see Remark \ref{discussionOfFarber}.


Our method relies on the following facts from representation theory for finite groups. Let $G$ be a finite group. Let $\rho: G \to \gl(V)$ and $\psi: G \to \gl(U)$ be two linear representations on the finite dimensional vector spaces $V$ and $U$ over $\C$. 

The vector spaces $V$ and $U$ can be canonically decomposed into isotypic components. More precisely, let $W_1,\dots,W_h$ be the irreducible representations of $G$. Then we can uniquely write $V = V_1 \oplus \cdots \oplus V_h$ and $U = U_1\oplus \cdots \oplus U_h$, where $V_i \cong W_i^{\oplus c_i}$ and $U_i \cong W_i^{\oplus d_i}$. 

Let $f: V \to U$ be a $G$-equivariant morphism of vector spaces. We are interested in computing $\ker f$ as a subrepresentation of $V$. First, we observe that Schur's Lemma implies that $\ker f = \oplus_{i=1}^h \ker f_i$, where $f_i = f|_{V_i}$. Then we state a proposition that calculates $\ker f_i$ as a subrepresentation of $V_i$. This proposition is a adaptation of \cite[Chapter 2, Proposition 8]{serre} for our computations. 

We first introduce some notations. Let the irreducible $G$-representation $W_i$ be given in matrix form $(r_{\alpha\beta}(s))$ for $s\in G$ with respect to a basis $(e_1,\dots,e_n)$. Define a linear map $p_{\alpha\beta}: V\to V$ as follows: 
\begin{equation}
    p_{\alpha\beta} = \frac{\dim W_i}{|G|}\sum_{g\in G} r_{\beta\alpha}(g^{-1})\rho(g),
    \label{equation: projection to the first drawer}
\end{equation} for $1\leq \alpha,\beta \leq n$. Let $V_{i,\alpha}$ be the image of $p_{\alpha\alpha}$.

\begin{prop}
    Let $\{x_1,\dots,x_m\}$ be a basis of $V_{i,1}$. Then $\ker f_i \cong W_i^{\oplus k}$ where $k = c_i - \rk \{f(x_1),\dots,f(x_m)\}$.
\end{prop}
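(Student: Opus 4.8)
The plan is to reduce the determination of $\ker f_i$ on the whole isotypic component $V_i$ to an ordinary linear-algebra computation on the single ``first drawer'' $V_{i,1} = \im p_{11}$. The mechanism enabling this reduction is that a $G$-equivariant $f$ commutes with every operator $p_{\alpha\beta}$, so $f$ respects the canonical decomposition of $V_i$ furnished by Serre. I would proceed in three moves: recall the structure of $V_i$ from \cite[Chapter 2, Proposition 8]{serre}; transport it to $f$ using equivariance; and then read off the kernel by rank--nullity.

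First I would record the facts about the $p_{\alpha\beta}$. Writing $d = \dim W_i$, these satisfy the matrix-unit relations $p_{\alpha\beta}p_{\gamma\delta} = \delta_{\beta\gamma}p_{\alpha\delta}$, each $p_{\alpha\alpha}$ is a projection, $\sum_\alpha p_{\alpha\alpha}$ projects $V$ onto $V_i$, and $p_{\alpha 1}$ restricts to an isomorphism $V_{i,1}\xrightarrow{\sim} V_{i,\alpha}$. Consequently $\dim V_{i,1} = c_i$, so a basis $x_1,\dots,x_m$ of $V_{i,1}$ has $m = c_i$. For each $j$, the vectors $x_j^{(\alpha)} := p_{\alpha 1}(x_j)$, $1\le\alpha\le d$, span a subrepresentation $W(x_j)$ on which $G$ acts through the matrices $(r_{\alpha\beta})$; thus $W(x_j)\cong W_i$ whenever $x_j\neq 0$, and $V_i = \bigoplus_{j=1}^m W(x_j)$. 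This is the canonical isotypic decomposition, and it identifies $V_{i,1}$ with the multiplicity space of $W_i$ in $V$.

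Next I would transport this picture to $f$. Since $p_{\alpha\beta}$ is literally the same linear combination $\tfrac{d}{|G|}\sum_g r_{\beta\alpha}(g^{-1})(\,\cdot\,)_g$ of group elements whether acting on $V$ via $\rho$ or on $U$ via $\psi$, and since $f\rho(g) = \psi(g)f$ for all $g$, the map $f$ intertwines the two families: $f\circ p^V_{\alpha\beta} = p^U_{\alpha\beta}\circ f$. Taking $\beta = 1$ yields $f(V_{i,1})\subseteq U_{i,1}$ and, crucially, $f\bigl(x_j^{(\alpha)}\bigr) = p^U_{\alpha 1}\bigl(f(x_j)\bigr)$ for all $\alpha,j$. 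Writing $y_j := f(x_j)\in U_{i,1}$, this shows that $f$ sends the basis $\{x_j^{(\alpha)}\}_\alpha$ of $W(x_j)$ to the analogous spanning family $\{p^U_{\alpha 1}(y_j)\}_\alpha$ of the $W_i$-part of $U$. Under the identifications $V_i\cong W_i\otimes V_{i,1}$ and $U_i\cong W_i\otimes U_{i,1}$ given by these bases, it says exactly that $f|_{V_i} = \id{W_i}\otimes\bigl(f|_{V_{i,1}}\bigr)$.

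Finally, from $f|_{V_i} = \id{W_i}\otimes(f|_{V_{i,1}})$ I would conclude $\ker f_i = W_i\otimes\ker(f|_{V_{i,1}})\cong W_i^{\oplus k}$ with $k = \dim\ker(f|_{V_{i,1}})$; since $\{x_j\}$ is a basis of $V_{i,1}$, rank--nullity gives $k = m - \rk(f|_{V_{i,1}}) = c_i - \rk\{f(x_1),\dots,f(x_m)\}$, which is the claim. The hard part will be the third paragraph: passing from the ``scalar'' identity $fp^V_{11} = p^U_{11}f$ to the full block-scalar shape of $f|_{V_i}$, i.e.\ checking that the matrix of $f|_{V_i}$ in the $x_j^{(\alpha)}$-bases does not depend on the index $\alpha$ ranging over $W_i$. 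This is precisely what the \emph{entire} family of relations $fp^V_{\alpha 1} = p^U_{\alpha 1}f$ (not only $\alpha = 1$) provides, and the main care needed is to keep the two families $p^V$ and $p^U$ notationally separate while remembering they are built from the same coefficients $r_{\beta\alpha}$.
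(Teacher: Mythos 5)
Your proof is correct, and it reaches the same structural conclusion as the paper --- that $f_i$ is ``block-scalar'' with scalar part $f|_{V_{i,1}}$, after which rank--nullity on the multiplicity space gives $k = c_i - \rk\{f(x_1),\dots,f(x_m)\}$ --- but the mechanism you use to get there is different. The paper writes $f_i$ as an explicit block matrix over the decompositions $V_i = \bigoplus_j V_i^j$ and $U_i = \bigoplus_k U_i^k$, invokes Schur's lemma to say each block $F_i^{jk}$ is zero or an isomorphism, and then normalizes bases so that the rank of $f_i$ is $\dim W_i$ times the rank of the matrix of first columns. You instead derive the operator identity $f\circ p^V_{\alpha\beta} = p^U_{\alpha\beta}\circ f$ directly from equivariance (since $p_{\alpha\beta}$ is the same linear combination of group elements on both sides) and use it to exhibit $f_i$ as $\id{W_i}\otimes(f|_{V_{i,1}})$ in the bases $\{p_{\alpha 1}(x_j)\}$. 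Your route is arguably cleaner on one point: the paper's claim that ``with appropriate choices of bases for $U_i^j$, $F_i^{jk}$ is either the identity matrix or the zero matrix'' cannot literally be arranged for all blocks with a common target simultaneously --- the correct statement is that in compatible bases each block is a scalar multiple of the identity, which is exactly what your intertwining relation $f\circ p^V_{\alpha 1} = p^U_{\alpha 1}\circ f$ delivers without any basis gymnastics. The paper's version is more concrete and closer to how the computation is actually implemented (one literally extracts the first column of each block); yours is the more robust argument. Both rest on the same input, namely \cite[Chapter 2, Proposition 8]{serre} for the properties of the $p_{\alpha\beta}$ and the identification of $V_{i,1}$ with the multiplicity space.
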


\begin{proof}
    Consider $V_i$ and $U_i$. We may further decompose them into irreducible $G$-representations. Let $V_i = V_i^1 \oplus \cdots \oplus V_i^{c_i}$ and $U_i = U_i^1 \oplus \cdots \oplus U_i^{d_i}$, where $V_i^j \cong U_i^k \cong W_i$ for all $j,k$. Fix a basis for each $V_i^j$ and $U_i^k$ and write $f_i$ in matrix form with respect to these bases. Then we have \[
    f_i = \begin{blockarray}{ccccc}
    V_i^1 & V_i^2 & \cdots & V_i^{c_i} \\
    \begin{block}{(cccc)c}
        0 & \cdots & & 0 & U_1\\
        \vdots & & & & \vdots \\
        0 & \cdots & & 0 & U_{i-1}\\
        F_i^{11} & F_i^{12} & \cdots & F_i^{1c_i} & U_i^1 \\
        \vdots & & & & \vdots \\
        F_i^{d_i1} & F_i^{d_i2} & \cdots & F_i^{d_ic_i} & U_i^{d_i} \\
        0 & \cdots & & 0 & U_{i+1}\\
        \vdots & & & & \vdots\\
        0 & \cdots & & 0 & U_h \\
    \end{block}
    \end{blockarray},\] where $F_i^{jk}$ and 0 represent block matrices.By Schur's Lemma, each $F_i^{jk}$ is zero or an isomorphism. With appropriate choices of bases for $U_i^j$, $F_i^{jk}$ is either the identity matrix or the zero matrix. Therefore, if we write $V_{i(1)}^j$ as the first column in the $V_i^j$ block, then $\rk f_i = \rk \{V_{i(1)}^j\}_{i=1}^{c_i} \cdot \dim W_i$. By the rank-nullity theorem, $\dim \ker f_i = \dim V_i - \rk f_i$. Since $\ker f_i$ is a subrepresentation of $V_i$, we know that $\ker f_i \cong W_i^{k}$ for some $k$. We can calculate $k = \dim \ker f_i/\dim W_i = c_i - \rk \{V_{i(1)}^j\}_{i=1}^{c_i}$.
    
    If the function $f$ is given in matrix form with respect to arbitrary bases, then we need to produce a basis for each $V_i^j$ and perform a change of basis. By \cite[Proposition 8]{serre}, $p_{\alpha\alpha}$ is zero on $V_j$ for $j\neq i$ and is a projection onto its image. Moreover, its image $V_{i,\alpha}$ is contained in $V_i$ and has dimension $c_i$. Given a basis $\{x_1, \dots, x_{c_i}\}$ for $V_{i,1}$, we can compute a basis for each $V_i^j$ as $\{p_{\alpha1}(x_j)\}_{\alpha=1}^n$. Therefore, after we change to this basis of $V_i$, we have $V_{i(1)}^j = f(x_j)$.
\end{proof}

In our computation, we take $V = C_{n+2}(\Delta_{2,n}, \Delta_{2,n}^{\theta^\circ};\Q)$, $U = C_{n+1}(\Delta_{2,n}, \Delta_{2,n}^{\theta^\circ};\Q)$ and $f = d_{n+2}$. Our computer program follows the steps below.
\begin{enumerate}
    \item We manually compute the character of $C_{n+2}(\Delta_{2,n}, \Delta_{2,n}^{\theta^\circ};\Q)$ and decompose it into irreducibles. Let $C_{n+2}(\Delta_{2,n}, \Delta_{2,n}^{\theta^\circ};\Q) \cong \oplus V_{\lambda_i}$, where $V_{\lambda_i} \cong \oplus S_{\lambda_i}^{c_i}$. Here $S_{\lambda_i}$ denotes the Specht module indexed by the partition $\lambda_i$.
    \item We generate bases for $C_{n+2}(\Delta_{2,n}, \Delta_{2,n}^{\theta^\circ};\Q)$ and $C_{n+1}(\Delta_{2,n}, \Delta_{2,n}^{\theta^\circ};\Q)$. Then we generate $d_{n+2}$ as a matrix with respect to these bases. This step took 224 seconds for $n=8$.
    \item For each $V_{\lambda_i}$, we generate a basis $B_{\lambda_i,1}$ for $V_{\lambda_i,1}$ by computing the images of $c_i$ random vectors in $V$ under the projection $p_{11}^i$ and check that they do form a basis. We use the Sage built-in matrix representations for Specht modules when producing $p_{11}^i$. This step took between 3.7 hours to 13.9 hours depending on $\lambda_i$ for $n=8$.
    \item We compute the rank of $\{d_{n+2} \circ v\}_{v \in B_{\lambda_i,1}}$ and hence obtain $\ker d_{n+2}|_{V_{\lambda_i,1}}$. This step took up to 450 seconds depending on the value of $c_i$.
\end{enumerate}
The computation for the case $n=8$ took 162 hours in total. The results of these computations are presented in Theorem \ref{mainThm}. The most expensive step is computing vectors in the image of $p_{11}^i$ as it requires summing over the entire symmetric group. Code is available at the author's website. \footnote{\url{https://www.math.brown.edu/~claudia_yun/}}

\begin{rmk}
\label{discussionOfFarber}
\cite[Theorem 1.1]{CFGP} gives the generating function $z_g$ for the $S_n$-equivariant top weight Euler characteristic of the moduli space $\mathcal{M}_{g,n}$ as a symmetric function. By the identification of Equation \eqref{Top weight cohomology of Mgn is identified with homology of Deltagn}, $z_g$ is also the generating function for the $S_n$-equivariant Euler characteristics of $\Delta_{g,n}$ after a change of sign. It is related to our computation in the following way. First, we specialize the formula to the case $g=2$. This is computed in \cite[Example 8.3]{CFGP}. 
\begin{equation}
    z_2 = -\frac{1}{12}\frac{1}{P_1} + \frac{1}{2}\frac{P_1}{P_2}-\frac{1}{6}\frac{P_1^2}{P_3}-\frac{1}{12}\frac{P_1^3}{P_2^2}-\frac{1}{6}\frac{P_2P_3}{P_6},
    \label{z2}
\end{equation} where $P_i=1+p_i$ and $p_i = \sum_{j>0}x_j^i$. We then focus on its degree $n$ part, denoted by $(z_2)_n$. By the formula after Definition 2.1 in \cite{CFGP}, 
\begin{equation}
    (z_2)_n = (-)\sum_{i\geq 0} \sum_{\sigma \in S_n} \frac{(-1)^i}{n!}\chi_{V^n_i}(\sigma)\psi(\sigma),
    \label{degNpart}
\end{equation} where $\chi_{V^n_i}$ is the character of the representation afforded by $\tilde{H}_{i}(\Delta_{2,n};\Q)$. The symmetric function $\psi(\sigma)$ is defined in the following way: suppose $\sigma \in S_n$ has cycle type $a_1\geq a_2\geq \cdots \geq a_l$. Then \[\psi(\sigma) = p_{a_1}p_{a_2}\cdots p_{a_l}.\] 
Since $\chi_{V^n_i}$ is only nontrivial when $i=n+1$ and $n+2$, the right-hand side of Equation \eqref{degNpart} simplifies. For a conjugacy class $C(\sigma)$ in $S_n$ represented by $\sigma$, the coefficient of $\psi(\sigma)$ in $(z_2)_n$ is 
\begin{equation}
    (z_2)_{n,\psi(\sigma)} = \frac{|C(\sigma)|}{n!}[(-1)^{n}\chi_{\tilde{H}_{n+1}(\Delta_{2,n};\Q)}(\sigma) + (-1)^{n+1}\chi_{\tilde{H}_{n+2}(\Delta_{2,n};\Q)}(\sigma)].
    \label{anotherWayToCalculateZ2}
\end{equation} So the difference between the characters of the top degree and lower degree homology representations can be read off of $z_2$. For $n\leq 6$, our computations agree with this formula; for $n=7$, we computed $\tilde{H}_9(\Delta_{2,7};\Q)$ in SageMath and combined Equation \eqref{z2} and Equation \eqref{anotherWayToCalculateZ2} to derive $\tilde{H}_8(\Delta_{2,7};\Q)$.
\label{faber}
\end{rmk}

\subsection{Analysis of $\tilde{H}_7(\Delta_{2,5};\Q)$}
Elements in the rational homology groups of $\Delta_{2,n}$ are rather mysterious. We hope that by analyzing the combinatorics of theta graphs we might have a better understanding of the structure of these homology groups. In this section, we identify and study a subrepresentation of $\tilde{H}_7(\Delta_{2,5};\Q)$ that has an explicit combinatorial description. 

By decomposing the character of $\tilde{H}_7(\Delta_{2,5};\Q)$, we know that $\tilde{H}_7(\Delta_{2,5};\Q) \cong S^{311} \oplus S^{32} \oplus S^{41}$, where $S^p$ is the Specht module indexed by a partition $p \vdash 5$. Using the projection formula described by Serre in \cite[Section 2.6]{serre}, we can compute the vector subspaces affording each irreducible subrepresentaton. The vector subspace $V_{311} \subseteq \tilde{H}_7(\Delta_{2,5};\Q)$ isormorphic to $S^{311}$ turns out to have a particularly nice basis. So we focus on $V_{311}$, give a somewhat intuitive basis, and establish an explicit representation isomorphism from $V_{311} \to S^{311}$. For simplicity, write $\tilde{H}_7(\Delta_{2,5};\Q)$ as $H$.

We first describe a basis of $V_{311}$. Let \[C = \left\{g_1 = \thetaThreeOneOne{4}{2}{3}{0}{1}, \quad g_2 = -\thetaThreeOneOne{2}{3}{1}{0}{4}, \quad g_3 = \thetaTwoTwoOne{2}{3}{4}{1}{0}, \quad g_4 = \thetaTwoTwoOne{2}{4}{3}{1}{0} \right\} \subset C_7\left(\Delta_{2,5},\Delta_{2,5}^{\theta^\circ};\Q\right),\] where the edge-labeling is 0 through 7 in reading order starting from the upper left edge for each theta graph. The edge-labeling is understood to be this one unless otherwise noted.

Let $\pi = (01234) \in S_5$. Let $\overline{C} = \bigcup_{i\geq 0} \pi^i C$, where $\pi C = \{[(G,m \circ \pi,w),\tau] : (G,m,w)\in C \}$. One can check that $\pi^i C \cap \pi^j C = \emptyset$ if $i \not \equiv j \mod 5$, treating $\pi^i C$ as subsets of $\ob(\Gamma_{2,5})$ when we forget the edge-labels.

\begin{prop}
    The element $v = \sum_{\G \in \overline{C}} \G$ is in $\tilde{H}_7(\Delta_{2,5};\Q) \cong H_7(\Delta_{2,5}, \Delta_{2,5}^{\theta^\circ};\Q)$ as in Proposition \ref{prop: H(Delta2n)=H(Delta2n, cyclic theta locus)}.
\end{prop}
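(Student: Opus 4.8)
The plan is to prove that $v$ is a cycle by showing directly that $d_{7}v = 0$. By Remark \ref{shortChainComplex} the relative complex $C_{*}(\Delta_{2,5},\Delta_{2,5}^{\theta^\circ};\Q)$ vanishes above degree $7$, so its top homology is $H_{7} = \ker d_{7}$, and each generator in $\overline{C}$ is a full theta type graph with $n+3 = 8$ edges, whence $v \in C_{7}(\Delta_{2,5},\Delta_{2,5}^{\theta^\circ};\Q)$. Under the identification of Proposition \ref{prop: H(Delta2n)=H(Delta2n, cyclic theta locus)} it therefore suffices to check $d_{7}v = 0$. I would first use symmetry: since $v = \sum_{i=0}^{4}\pi^{i}(g_{1}+g_{2}+g_{3}+g_{4})$ and the boundary map is $S_{5}$-equivariant,
\[
d_{7}v \;=\; \sum_{i=0}^{4}\pi^{i}\,d_{7}\bigl(g_{1}+g_{2}+g_{3}+g_{4}\bigr),
\]
which reduces the task to computing the boundary of the four explicit generators and then summing over the cyclic orbit.

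Next I would compute each $d_{7}g_{j}$ by collapsing the eight edges one at a time, attaching the sign $(-1)^{i}$ to the edge labelled $i$ and reducing modulo $\Delta_{2,5}^{\theta^\circ}$. The contractions split into three kinds according to which edge of the subdivided theta graph is collapsed. Collapsing an interior edge of an arc, that is an edge joining two marked valence-two vertices, produces a doubly-marked vertex and hence a non-injective marking; such a graph is not of full theta type and so represents $0$ in $C_{6}(\Delta_{2,5},\Delta_{2,5}^{\theta^\circ};\Q)$ by the basis description of Remark \ref{remark: relChainGroupBasis}. Collapsing an end edge of an arc pushes the adjacent marking onto a trivalent vertex; if that marking was the \emph{only} one on its arc, the arc becomes unmarked and all remaining markings then lie on the complementary cycle, so the result has cyclic theta type and vanishes in the relative complex by Theorem \ref{cyclicTheta}. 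The surviving terms come precisely from collapsing an end edge of an arc carrying at least two markings: these are seven-edge full theta graphs in which a single marking has been pushed onto a trivalent vertex while every arc stays marked. A short count gives two surviving terms for each of $g_{1},g_{2}$ and four for each of $g_{3},g_{4}$.

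To compare these surviving terms across the generators and their $\pi$-translates, I would record each seven-edge graph in a normal form consisting of the marking carried by the unique marked trivalent vertex together with the unordered triple of arcs, each read as a sequence starting from that trivalent vertex. Because only one trivalent vertex is marked, there is no automorphism exchanging the two trivalent vertices, so this data determines the isomorphism class, while permuting the three arcs is the only nontrivial automorphism and contributes a sign. I would then tabulate every surviving contraction in this normal form, recording its coefficient as the product of the generator's sign, the contraction sign $(-1)^{i}$, and the parity of the edge relabeling needed to bring the induced labeling into the chosen normal form, and finally apply $\pi^{i}$ for $i=0,\dots,4$.

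The main obstacle is the final sign bookkeeping: verifying that, once every edge-relabeling parity is correctly accounted for, the surviving graphs cancel after summing over the cyclic orbit. I expect a good fraction of the terms to cancel in pairs already inside $d_{7}(g_{1}+g_{2}+g_{3}+g_{4})$ — two occurrences sharing a normal form must come with opposite total sign — and the remaining terms to cancel against their $\pi$-translates, with the whole cyclic sum collapsing in telescoping fashion (using $\pi^{5}=1$). The classification of contractions in the second paragraph is routine combinatorics of the theta graph, but getting each relabeling parity right, so that matched normal forms cancel rather than reinforce, is delicate and is the crux of the argument.
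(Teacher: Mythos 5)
Your proposal is correct and follows essentially the same route as the paper: reduce to showing $\partial(v)=0$, use $S_5$-equivariance to restrict attention to $\partial(g_1+g_2+g_3+g_4)$, discard the contractions that land in the bridge locus or the cyclic theta locus, and cancel the surviving terms over the cyclic orbit via $\pi^5=1$. The paper simply records the outcome of the boundary computation explicitly (four surviving terms of the form $-h+\pi^2h-k+\pi^2k$, matching your count of twelve terms collapsing to four), whereas you describe the classification of contractions and the sign bookkeeping in more detail without carrying it out.
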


\begin{proof}
Consider the cellular chain complex \[0 \to C_7\left(\Delta_{2,5}, \Delta_{2,5}^{\theta^\circ};\Q\right) \xrightarrow{\partial} C_6\left(\Delta_{2,5}, \Delta_{2,5}^{\theta^\circ};\Q\right) \to C_5\left(\Delta_{2,5}, \Delta_{2,5}^{\theta^\circ};\Q\right) \to 0.\] Since $\tilde{H}_7(\Delta_{2,5}, \Delta_{2,5}^{\theta^\circ};\Q) \cong \ker \partial$, it suffices to show that $\partial(v) = 0$.

Let $u = \sum_{\G\in C}\G$. Then \[\partial(u) = - \thetaTwoOneOne{2}{4}{0}{3}{1} + \thetaTwoOneOne{4}{1}{2}{0}{3} - \thetaTwoOneOne{1}{4}{0}{3}{2}  + \thetaTwoOneOne{3}{1}{2}{0}{4}.\] Let the first summand be $h$ and the third summand be $k$. Notice that the second summand is $\pi^2h$ and the fourth is $\pi^2 k$, so we can rewrite \[\partial(u) = -h+\pi^2h-k+\pi^2k.\] Since $\partial$ is $S_5$-equivariant, $\partial(\sum_{\G \in \pi^iC}\G) = \partial(\pi^iu) = \pi^i\partial (u) =-\pi^ih+\pi^{i+2}h-\pi^ik+\pi^{i+2}k$. So 
        \begin{align*}
            \partial(v) &= \partial \left(\sum_{i=0}^4 \pi^iu\right)\\
            &= \sum_{i=0}^4\pi^i\partial(u)\\
            &= 0
        \end{align*}
\end{proof}

\begin{prop}
Let $V_{311} \subseteq H$ be the vector subspace isomorphic to the Specht module $S^{311}$ as representations. Then $v \in V_{311}$.
\end{prop}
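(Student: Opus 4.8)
The plan is to pin $v$ down inside $H=S^{311}\oplus S^{32}\oplus S^{41}$ using only two group elements: the $5$-cycle $\pi=(01234)$ already in play, and a reflection $\tau=(14)(23)$ that normalizes $\langle\pi\rangle$. Because this decomposition of $H$ is multiplicity-free, the isotypic subspaces $V_{311},V_{32},V_{41}$ are canonical, so it suffices to show that the $S^{32}$- and $S^{41}$-components of $v$ both vanish.

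First I would record that $v$ is $\langle\pi\rangle$-invariant: from $v=\sum_{i=0}^4\pi^i u$ with $u=\sum_{\G\in C}\G$ we get $\pi v=\sum_{i=1}^{5}\pi^i u=v$ since $\pi^5u=u$. Hence $v$ lies in the $C_5:=\langle\pi\rangle$-invariant subspace $H^{C_5}$. A character count using the value of $\chi_\lambda$ on a $5$-cycle (namely $-1,0,1$ for $\lambda=41,32,311$) gives
\[
\dim(S^{41})^{C_5}=0,\qquad \dim(S^{32})^{C_5}=1,\qquad \dim(S^{311})^{C_5}=2 .
\]
Since the central projector $p_{41}$ commutes with $\pi$, we have $p_{41}(v)\in V_{41}^{C_5}=(S^{41})^{C_5}=0$, so the $S^{41}$-part of $v$ is already gone and only the $S^{32}$-part remains to be killed.

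To separate $S^{32}$ from $S^{311}$ inside $H^{C_5}$ I would use $\tau=(14)(23)$, which satisfies $\tau\pi\tau^{-1}=\pi^{-1}$ and therefore preserves $H^{C_5}$. The trace of $\tau$ on $(S^\lambda)^{C_5}$ is
\[
\operatorname{tr}\bigl(\tau\mid (S^\lambda)^{C_5}\bigr)=\tfrac15\sum_{i=0}^4\chi_\lambda(\tau\pi^i)=\chi_\lambda(2^21),
\]
the last equality holding because each $\tau\pi^i$ is a reflection of cycle type $(2,2,1)$. As $\chi_{32}(2^21)=1$ on a $1$-dimensional space and $\chi_{311}(2^21)=-2$ on a $2$-dimensional space, $\tau$ acts as $+1$ on $(S^{32})^{C_5}$ and as $-1$ on $(S^{311})^{C_5}$. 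Thus, within $H^{C_5}$, the $S^{32}$-component is exactly the $(+1)$-eigenspace of $\tau$ and the $S^{311}$-component is the $(-1)$-eigenspace. Consequently it is enough to prove $\tau(v)=-v$: then $p_{32}(v)$ is $\pi$-invariant, hence lies in $(S^{32})^{C_5}$ where $\tau$ acts as $+1$, so using that $p_{32}$ is central we get $p_{32}(v)=\tau\,p_{32}(v)=p_{32}(\tau v)=-p_{32}(v)$, forcing $p_{32}(v)=0$; together with $p_{41}(v)=0$ this yields $v=p_{311}(v)\in V_{311}$.

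The remaining and genuinely computational step is the identity $\tau(v)=-v$. Using $\tau\pi^i=\pi^{-i}\tau$ and reindexing, $\tau(v)=\sum_{i=0}^4\pi^i(\tau u)$, so it suffices to compute $\tau u$ as a signed sum of theta graphs and verify $\sum_{i=0}^4\pi^i(\tau u+u)=0$. Concretely this means applying the marking relabeling $(14)(23)$ to each of $g_1,g_2,g_3,g_4$, normalizing the resulting graph back to the chosen skeleton representative, and carefully tracking the sign produced by the induced permutation of the fixed edge-labeling (recall $[\G,\tau'\circ\alpha]=\operatorname{sgn}(\alpha)[\G,\tau']$). I expect this bookkeeping of edge-label signs under the relabeling, rather than any representation theory, to be the main obstacle: one must confirm that the four relabeled, re-signed generators combine with $u$ to lie in the kernel of the averaging operator $\sum_{i}\pi^i$.
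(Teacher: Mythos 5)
Your representation-theoretic reduction is correct and takes a genuinely different route from the paper. The paper's proof is a direct machine computation: it forms the $15\times 15$ projector $P_{311}=\frac{\chi_{311}(e)}{|S_5|}\sum_{\pi\in S_5}\chi_{311}(\pi)\pi$ in Sage and checks that $v$ lies in its image. You instead use the $\pi$-invariance of $v$ (immediate from $v=\sum_{i=0}^4\pi^iu$ and $\pi^5=e$) to dispose of the $S^{41}$-isotypic part for free, and then propose to kill the $S^{32}$-part with a single dihedral reflection. Your character arithmetic checks out: $\chi_{41},\chi_{32},\chi_{311}$ take the values $-1,0,1$ on a $5$-cycle and $0,1,-2$ on cycle type $(2,2,1)$, each $\tau\pi^i$ is indeed a reflection of type $(2,2,1)$ since $\tau\pi\tau^{-1}=\pi^{-1}$, so $\dim(S^{41})^{C_5}=0$, $\dim(S^{32})^{C_5}=1$, $\dim(S^{311})^{C_5}=2$, and $\tau$ acts as $+1$ on $(S^{32})^{C_5}$ and as $-\mathrm{id}$ on $(S^{311})^{C_5}$. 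This is a cleaner and more conceptual argument than the paper's, and it would replace a $120$-term sum over $S_5$ by a single sign verification.

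That said, as written the proof has a genuine gap: everything hinges on the identity $\tau(v)=-v$, and you never establish it. You correctly describe what is required --- apply the relabeling $(14)(23)$ to $g_1,\dots,g_4$, renormalize each resulting marked graph to its chosen representative with its fixed edge-labeling, track the sign of the induced edge permutation via $[\G,\tau'\circ\alpha]=\sgn(\alpha)[\G,\tau']$, and check $\sum_{i=0}^4\pi^i(\tau u+u)=0$ --- but you explicitly defer this bookkeeping and flag it as the step most likely to go wrong. Note that once $\pi v=v$ is known, $H^{C_5}=(V_{32})^{C_5}\oplus(V_{311})^{C_5}$ with $\tau$ acting as $+1$ and $-1$ respectively, so $\tau(v)=-v$ is \emph{equivalent} to the proposition being proved: the deferred computation carries the entire content of the statement, and a single sign error in it would collapse the argument. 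Until that finite check is actually carried out (by hand or by the same kind of computer verification the paper uses), the proof is incomplete.
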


\begin{proof}
By decomposing the character, we know that \[H \cong S^{311} \oplus S^{32} \oplus S^{41}\] as $S_5$-representation. Since each irreducible subrepresentation has multiplicity 1, there is a unique decomposition of $H$ into irreducible $S_5$-invariant subspaces. Let $\sigma \vdash 5$. The projection map on $H$ to its irreducible subspace isomorphic to $S^\sigma$ is \[P_\sigma = \frac{\chi_\sigma(e)}{|S_5|}\sum_{\pi \in S_5}\chi_\sigma(\pi)^*\cdot\pi,\] where $\pi$ is understood as an automorphism on $H$ and $*$ denotes the complex conjugate \cite[Section 2.6]{serre}. In this case $\chi_\sigma$ is always real, so $\chi_\sigma(\pi)^* = \chi_\sigma(\pi)$. In sage, we compute the projection to $V_{311}$ to be
\setcounter{MaxMatrixCols}{15}
\[P_{311} = \begin{pmatrix}
10 & 3 & 3 & -6 & -1 & -1 & 3 & -5 & -2 & 3 & -5 & -2 & 1 & 1 & 2 \\
2 & 9 & 1 & 4 & -3 & -1 & -5 & 1 & 0 & 1 & -3 & -6 & 1 & 3 & 0 \\
0 & 0 & 8 & 2 & -2 & -4 & 2 & -2 & -4 & -4 & 2 & 2 & 4 & 2 & 4 \\
-4 & 2 & 2 & 8 & 0 & 0 & -4 & 2 & -2 & -4 & 2 & -2 & 0 & 0 & 2 \\
4 & -3 & -1 & 2 & 9 & 1 & 1 & -5 & 0 & -3 & 1 & -6 & -1 & -9 & 0 \\
2 & -2 & -4 & 0 & 0 & 8 & -2 & 2 & -4 & 2 & -4 & 2 & -8 & 0 & 4 \\
2 & -4 & 2 & -4 & 2 & -2 & 8 & 0 & 0 & -4 & -2 & 2 & 2 & -2 & 0 \\
-4 & 3 & -3 & 2 & -3 & 1 & 3 & 11 & 2 & -5 & -7 & 0 & -1 & 3 & -2 \\
-2 & 2 & -4 & -2 & 2 & -4 & 0 & 0 & 8 & 2 & 2 & -4 & 4 & -2 & -8 \\
2 & 2 & -4 & -4 & -2 & 2 & -4 & -2 & 2 & 8 & 0 & 0 & -2 & 2 & -2 \\
-4 & -3 & 3 & 2 & 1 & -3 & -5 & -7 & 0 & 3 & 11 & 2 & 3 & -1 & 0 \\
-4 & -5 & 1 & -4 & -5 & 1 & 3 & 3 & -2 & 1 & 1 & 10 & -1 & 5 & 2 \\
2 & 4 & 2 & -2 & -2 & -4 & 4 & 2 & 2 & -2 & -4 & -2 & 4 & 2 & -2 \\
2 & 2 & 4 & -2 & -4 & -2 & -2 & -4 & -2 & 4 & 2 & 2 & 2 & 4 & 2 \\
4 & 2 & 2 & 4 & 2 & 2 & -2 & -2 & -4 & -2 & -2 & -4 & -2 & -2 & 4
\end{pmatrix},\] with respect to the basis described in Remark \ref{remark: relChainGroupBasis}. We check in Sage that $v$ is in the image of $P_{311}$.
\end{proof}

\begin{prop}
    The set $S=\{\sigma v\}_{\sigma \in S_{\{0,1,2\}}}$ is a basis for $V_{311}$. We also compute the unique (up to rescaling) isomorphism from $V_{311} \to S^{311}$ in this basis.
\end{prop}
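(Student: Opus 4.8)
The plan is to combine a dimension count with a branching-rule computation, reduce the basis claim to a single invertibility check, and then pin down the isomorphism using Schur's lemma.

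First I would record that $\dim V_{311}=\dim S^{311}=6$: the hook length formula gives $\dim S^{(3,1,1)}=5!/(5\cdot 2\cdot 1\cdot 2\cdot 1)=6$, and since $S^{311}$ occurs in $H$ with multiplicity one (by the decomposition $H\cong S^{311}\oplus S^{32}\oplus S^{41}$), the isotypic component $V_{311}$ is a single copy of $S^{311}$. Because $v\in V_{311}$ was shown above and $V_{311}$ is an $S_5$-subrepresentation, every $\sigma v$ with $\sigma\in S_{\{0,1,2\}}$ again lies in $V_{311}$; hence $S\subseteq V_{311}$ and $|S|\leq 6=\dim V_{311}$. It therefore suffices to prove that the six vectors $\{\sigma v\}_{\sigma\in S_{\{0,1,2\}}}$ are linearly independent.

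The conceptual reason such an orbit can form a basis is that $S^{311}$ restricts to $S_{\{0,1,2\}}\cong S_3$ as the regular representation. Iterating the branching rule from $S_5$ down to $S_3$ (removing one box at a time) yields $\operatorname{Res}^{S_5}_{S_3}S^{(3,1,1)}=S^{(3)}\oplus 2\,S^{(2,1)}\oplus S^{(1,1,1)}$, which is exactly $\mathbb{Q}[S_3]$. Consequently $\{\sigma v\}_{\sigma\in S_{\{0,1,2\}}}$ is a basis of $V_{311}$ if and only if $v$ is a cyclic generator, equivalently corresponds to a unit under the $S_3$-module isomorphism $V_{311}\cong\mathbb{Q}[S_3]$. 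Under the Wedderburn decomposition $\mathbb{Q}[S_3]\cong\mathbb{Q}\times\mathbb{Q}\times M_2(\mathbb{Q})$ this unit condition is checkable factor by factor: the trivial projection $\sum_{\sigma}\sigma v$ and the sign projection $\sum_{\sigma}\operatorname{sgn}(\sigma)\,\sigma v$ must both be nonzero, and the image of $v$ in the $M_2(\mathbb{Q})$ factor must be invertible.

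To verify this for the specific $v$, I would pass to the basis $\mathcal{B}$ of $C_7(\Delta_{2,5},\Delta_{2,5}^{\theta^\circ};\Q)$ from Remark \ref{remark: relChainGroupBasis}, expand each of the six chains $\sigma v$ (each a signed sum of the $20$ theta graphs comprising $\overline{C}$) in those coordinates, assemble the resulting $6\times|\mathcal{B}|$ matrix, and confirm in Sage that it has rank $6$; equivalently, one checks the three nonvanishing/invertibility conditions above. This is the heart of the argument and the main obstacle: the regular-representation structure only guarantees that \emph{some} vector has a linearly independent orbit, not that our particular $v$ does, so establishing that $v$ is a unit is where the real content lies. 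Finally, for the isomorphism, since $S^{311}$ is irreducible and appears in $H$ with multiplicity one, Schur's lemma makes the $S_5$-equivariant map $\Phi\colon V_{311}\to S^{311}$ unique up to a scalar. To compute it in the basis $S$, I would take Sage's matrix representation of $S^{311}$ together with the explicit $S_5$-action on $V_{311}$ in the basis $\{\sigma v\}$—on which $S_{\{0,1,2\}}$ acts by the regular representation while a $5$-cycle such as $\pi=(01234)$ acts through its known effect on $\overline{C}$—and solve the linear intertwining system $\Phi\,\rho_{V_{311}}(g)=\rho_{S^{311}}(g)\,\Phi$ over a generating set of $S_5$. The solution space is one-dimensional, and any nonzero solution is the desired isomorphism, determined up to the expected overall scalar.
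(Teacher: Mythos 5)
Your proposal is correct and follows the same overall skeleton as the paper—reduce to linear independence of the six-element orbit via the dimension count $\dim V_{311}=\dim S^{(3,1,1)}=6$, then get the isomorphism from Schur's lemma—but the two key steps are executed differently. For linear independence, the paper gives a short hand argument: it writes out a few specific $\thetaTwoTwoOne{}{}{}{}{}$-type summands of each $\sigma v$, compares coefficients of those basis graphs in a putative relation $\sum c_i v_i=0$, and chains the resulting equalities into $c_1=-c_1$, forcing all $c_i=0$. You instead propose a Sage rank computation on the $6\times|\mathcal{B}|$ coordinate matrix; that is perfectly valid (and consistent with the paper's computational methodology elsewhere), but note that as written your argument is a plan for a verification rather than the verification itself—the paper's coefficient-comparison is the content you would need to supply or actually run. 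Your observation that $\operatorname{Res}^{S_5}_{S_3}S^{(3,1,1)}\cong\Q[S_3]$ is the regular representation, so that the basis question is exactly whether $v$ maps to a unit of $\Q[S_3]$ (checkable factorwise through the Wedderburn decomposition), is a genuine conceptual addition absent from the paper; it explains \emph{why} an $S_3$-orbit has a chance of being a basis and correctly flags that this structure alone does not decide the question for the particular $v$. For the isomorphism, the paper averages the identity matrix over $S_5$ to produce an explicit intertwiner $h^0$ and checks it is nonzero, while you solve the intertwining system over a generating set; these are equivalent in outcome, with the averaging approach yielding a concrete matrix in one pass and yours requiring a nullspace computation but guaranteeing a nonzero answer without a separate check.
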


\begin{proof}
Let $\sigma \in S_{\{0,1,2\}} < S_5$. Since $V_{311}\subset H$ is $S_5$-invariant, $\sigma v \in V_{311}$. Denote $v$ as $v_1$ for convenience. Write $v_2 = (01)v, v_3 = (02)v, v_4 = (12)v, v_5 = (012)v$, and $v_6 = (021)v$. Consider the following summands in $v_1$, \[\thetaTwoTwoOne{1}{2}{3}{0}{4}+\thetaTwoTwoOne{1}{3}{2}{0}{4}+\thetaTwoTwoOne{0}{1}{2}{4}{3}+\thetaTwoTwoOne{0}{2}{1}{4}{3}+\cdots.\] Then consider the orbits of these summands under $S_{\{0,1,2\}}$ .
\begin{align*}
    v_2 &= \thetaTwoTwoOne{0}{2}{3}{1}{4}+\thetaTwoTwoOne{0}{3}{2}{1}{4}+\thetaTwoTwoOne{1}{0}{2}{4}{3}+\thetaTwoTwoOne{1}{2}{0}{4}{3}+\cdots \\
    v_3 &= \thetaTwoTwoOne{1}{0}{3}{2}{4}+\thetaTwoTwoOne{1}{3}{0}{2}{4}+\thetaTwoTwoOne{2}{1}{0}{4}{3}+\thetaTwoTwoOne{2}{0}{1}{4}{3}+\cdots
\end{align*}
\begin{align*}
    v_4 &= \thetaTwoTwoOne{2}{1}{3}{0}{4}+\thetaTwoTwoOne{2}{3}{1}{0}{4}+\thetaTwoTwoOne{0}{2}{1}{4}{3}+\thetaTwoTwoOne{0}{1}{2}{4}{3}+\cdots \\
    v_5 &= \thetaTwoTwoOne{2}{0}{3}{1}{4}+\thetaTwoTwoOne{2}{3}{0}{1}{4}+\thetaTwoTwoOne{1}{2}{0}{4}{3}+\thetaTwoTwoOne{1}{0}{2}{4}{3}+\cdots \\
    v_6 &= \thetaTwoTwoOne{0}{1}{3}{2}{4}+\thetaTwoTwoOne{0}{3}{1}{2}{4}+\thetaTwoTwoOne{2}{0}{1}{4}{3}+\thetaTwoTwoOne{2}{1}{0}{4}{3}+\cdots
\end{align*} Now let \[\sum_{i=1}^6 c_iv_i = 0,\] where $c_i \in \Q$. Comparing the first two summands in each vector, we see that it must be true that $c_1 = -c_2$, $c_3 = -c_5$, and $c_4 = -c_6$. Comparing the last two summands, we conclude that $c_1 = c_4$, $c_2 = c_5$ and $c_3 = c_6$. Therefore, \[c_1 = -c_2 = -c_5 = c_3 = c_6 = -c_4 = -c_1,\] so $c_i = 0$ for all $i$, thus $S$ is linearly independent. Notice that $S^{311}$ has dimension 6, so $S$ is a basis of $V_{311}$.

Let $W=\{e_{t_1}, e_{t_2}, e_{t_3}, e_{t_4}, e_{t_5}, e_{t_6}\}$ be a basis for $S^{311}$, where $e_{t_i}$ is a standard polytabloid as described in \cite{sagan}. Let
\ytableausetup{tabloids,centertableaux}
\begin{align*}
    t_1 &= \ytableaushort{012,3,4}, \quad t_2 = \ytableaushort{013,2,4}, \quad t_3 = \ytableaushort{014,2,3} \\
    \\
    t_4 &= \ytableaushort{023,1,4}, \quad t_5 = \ytableaushort{024,1,3}, \quad t_6 = \ytableaushort{034,1,2}.
\end{align*} Using $S$ as a basis for $V_{311}$ and $W$ as a basis for $S^{311}$, we can construct an explicit isomorphism between them. Let $h:V_{311} \to S^{311}$ be the identity matrix. Let $\rho^1:S_5\to \gl(V_{311})$ and $\rho^2:S_5\to \gl(S^{311})$. Then \[h^0 = \sum_{\pi \in S_5}(\rho^2(\pi))^{-1}h\rho^1(\pi)\] is $S_5$-equivariant and is either 0 or an isomorphism by Schur's lemma \cite{serre}. We calculate $h^0$ in Sage to be \[h^0 = 20\cdot \begin{pmatrix}
1 & 1 & 1 & 1 & 1 & 1 \\
1 & 1 & -1 & -1 & -1 & -1 \\
1 & 1 & -1 & 1 & 1 & -1 \\
-1 & -1 & -1 & 1 & -1 & 1 \\
1 & -1 & 1 & 1 & -1 & 1 \\
1 & 1 & -1 & -1 & 1 & -1
\end{pmatrix}.\]
\end{proof}

\noindent The code for this section can also be found on the author's website.

\begin{rmk}
We conclude with a dicussion of some open questions and natural extensions of the results of this paper.
\begin{itemize}
    \item \cite[Theorem 1.1]{CFGP} gives the generating function for the difference of the characters $\chi_{\tilde{H}_{n+1}(\Delta_{2,n};\Q)}$ and $\chi_{\tilde{H}_{n+2}(\Delta_{2,n};\Q)}$, as discussed in Remark \ref{faber}. Can we find generating functions for $\chi_{\tilde{H}_{n+1}(\Delta_{2,n};\Q)}$ and $\chi_{\tilde{H}_{n+2}(\Delta_{2,n};\Q)}$ individually in a similar fashion using symmetric functions? This might be challenging. We see in Remark \ref{faber} that $(z_2)_n$ only contains $\psi(\sigma)$ for $\sigma$ having cycles of lengths 1, 2, 3, and 6. This is possible when we consider the difference between two characters as their values may coincide for many conjugacy classes. However, when we consider the individual characters $\chi_{\tilde{H}_{n+1}(\Delta_{2,n};\Q)}$ and $\chi_{\tilde{H}_{n+2}(\Delta_{2,n};\Q)}$, we already see from our results that they have nonzero characters on elements with various cycle types for each $n$. In particular, they seem to have nonzero value on cycle types with cycles of arbitrary lengths. That poses a challenge to representing their characters as symmetric functions.
    \item From the discussion of this section, we see that these representations in $V_{311}$ have interesting combinatorics. Is there a way to generalize the combinatorial structure we saw to higher $n$'s? Can we use the combinatorics of theta graphs to produce a nonzero element in $\tilde{H}_{n+2}(\Delta_{2,n}, \Delta^{\theta^\circ}_{2,n};\Q)$ for each $n$?
    \item $\Delta_{2,n}$ has nontrivial reduced rational homology in the top two degrees for $n\geq 4$. That means $\Delta_{2,n}$ can no longer be bouquet of spheres like the cases when $g=0$ and $g=1$. For $g=2$, by manual construction and Remark \ref{rmk: bridge locus is contractible}, we know that $\Delta_{2,0}$ and $\Delta_{2,1}$ are contractible. What can we say about the homotopy types of $\Delta_{2,n}$ for $n\geq 2$?
\end{itemize} 
\end{rmk}

\bibliographystyle{amsalpha}
\bibliography{main}

\newpage
\appendix

\section{Character tables}
\label{charTable}

\begin{table}[h]
\centering
\begin{tabular}{|l|l|l|l|l|l|}
\hline
Conjugacy class & 1111 & 211 & 22 & 31 & 4 \\ \hline
$H_6(\Delta_{2,4};\Q)$ & 3   & -1   & -1       & 0     & 1  \\ \hline
$H_5(\Delta_{2,4};\Q)$ & 1   & 1    & 1        & 1     & 1   \\ \hline  
\end{tabular}
\caption{Character table for $n=4$.}
\label{char4}
\end{table}

\begin{table}[h]
\centering
\begin{tabular}{|l|l|l|l|l|l|l|l|}
\hline
Conjugacy class  & 11111 & 2111 & 221 & 311 & 32 & 41 & 5 \\ \hline
$H_7(\Delta_{2,5};\Q)$ & 15  & 3    & -1       & 0     & 0         & -1     & 0       \\ \hline
$H_6(\Delta_{2,5};\Q)$ & 5   & 1    & 1        & -1    & 1         & -1     & 0       \\ \hline
\end{tabular}
\caption{Character table for $n=5$.}
\label{char5}
\end{table}

\begin{table}[h]
\centering
\begin{tabular}{|l|l|l|l|l|l|l|l|l|l|l|l|}
\hline
Conjugacy class & 111111 & 21111 & 2211 & 222 & 3111 & 321 & 33 & 411 & 42 & 51 & 6 \\ \hline
$H_8(\Delta_{2,6};\Q)$ & 86& 2& 10& 6& -1& -1& 2 & 0 & 0 & 1 & 0 \\ \hline
$H_7(\Delta_{2,6};\Q)$ & 26& 2& -2& -2& -1& -1& -1 & 0 & 0 & 1 & 1 \\ \hline
\end{tabular}
\caption{Character table for $n=6$.}
\label{char6}
\end{table}

\begin{table}[h]
\centering
\begin{tabular}{ |l|l|l|l|l|l|l|l|l|l|l|}
\hline
Conjugacy class & 1111111 & 211111 & 22111 & 2221 & 31111 & 3211 & 322 & 331 & 4111 & 421  \\ \hline
    $H_9(\Delta_{2,7};\Q)$ &575&5&-13&17&-1&-1&-1&-1&-1&1\\ \hline
    $H_8(\Delta_{2,7};\Q)$ &155&5&-1&-7&-1&-1&-1&5&-1&1\\ \hline
\end{tabular}
\begin{tabular}{|l|l|l|l|l|l| }
\hline
    Conjugacy class & 43 & 511 & 52 & 61 & 7 \\ \hline
    $H_9(\Delta_{2,7};\Q)$ &-1&0&0&-1&1 \\ \hline
    $H_8(\Delta_{2,7};\Q)$ &-1&0&0&-1&1\\ \hline
\end{tabular}
\caption{Character table for $n=7$.}
\label{char7}
\end{table}

\begin{table}[h!]
\centering
\begin{tabular}{ |l|l|l|l|l|l|l|l|l|l|}
\hline
   Conjugacy class & 11111111 & 2111111 & 221111 & 22211 & 2222 & 311111 & 32111 & 3221 & 3311  \\ \hline
    $H_{10}(\Delta_{2,8};\Q)$ & 4426 & 16 & -2 & -84 & -30 & 1 & 1 & 1 & 4\\ \hline
    $H_9(\Delta_{2,8};\Q)$ & 1066 & 16 & -2 & 12 & 2 & 1 & 1 & 1 & -2\\ \hline
\end{tabular}
\begin{tabular}{|l|l|l|l|l|l|l|l|l|l|l|l|l|l| }
\hline
    Conjugacy class &332 & 41111 & 4211 & 422 & 431 & 44 & 5111 & 521 & 53 & 611 & 62 & 71 & 8 \\ \hline
    $H_{10}(\Delta_{2,8};\Q)$ & 4 & -2 & 0 & 2 & 1 & -2 & 1 & 1 & 1 & 0 & 0 & 2 & 0 \\ \hline
    $H_9(\Delta_{2,8};\Q)$ & 4 & -2 & 0 & 2 & 1 & -2 & 1 & 1 & 1 & 0 & 2 & 2 & 0 \\ \hline
\end{tabular}
\caption{Character table for $n=8$.}
\label{table: char8}
\end{table}
\end{document}